\title[Dual Pairs and Regularization of Kummer Shapes]{Dual Pairs and Regularization\\of Kummer Shapes in Resonances}
\author[Tomoki Ohsawa]{}
\email{tomoki@utdallas.edu}
\date{\today}
\dedicatory{In honor of Darryl Holm's 70th birthday}
\keywords{Resonance, dual pairs, Lie--Poisson dynamics}
\subjclass{37J15, 53D17, 53D20, 70K30}
\theoremstyle{plain}
\newtheorem{theorem}{Theorem}[section]
\newtheorem{corollary}[theorem]{Corollary}
\newtheorem{lemma}[theorem]{Lemma}
\newtheorem{proposition}[theorem]{Proposition}
\theoremstyle{definition}
\newtheorem{example}[theorem]{Example}
\theoremstyle{remark}
\newtheorem{remark}[theorem]{Remark}
\def\od#1#2{\dfrac{d#1}{d#2}}
\def\pd#1#2{\dfrac{\partial #1}{\partial #2}}
\def\tpd#1#2{\partial #1/\partial #2}
\def\parentheses#1{\!\left(#1\right)}
\def\braces#1{\!\left\{#1\right\}}
\def\tr{\mathop{\mathrm{tr}}\nolimits}
\def\Span{\operatorname{span}} 
\def\diag{\operatorname{diag}}
\def\norm#1{\left\|#1\right\|}
\def\R{\mathbb{R}}
\def\C{\mathbb{C}}
\def\N{\mathbb{N}}
\def\defeq{\mathrel{\mathop:}=}
\def\eqdef{=\mathrel{\mathop:}}
\def\setdef#1#2{ \left\{ #1 \ |\ #2 \right\} }
\def\ip#1#2{\left\langle#1,#2\right\rangle}
\renewcommand{\Re}{\operatorname{Re}}
\renewcommand{\Im}{\operatorname{Im}}
\def\eps{\varepsilon}
\def\rmi{{\rm i}}
\def\d{\mathbf{d}}
\def\ins#1{{\bf i}_{#1}}
\def\PB#1#2{\left\{#1,#2\right\}}
\newcommand\Ad{\operatorname{Ad}}
\newcommand\ad{\operatorname{ad}}
\def\SO{\mathsf{SO}}
\def\U{\mathsf{U}}
\def\SU{\mathsf{SU}}
\def\u{\mathfrak{u}}
\def\su{\mathfrak{su}}
\newenvironment{tbmatrix}{\left[\begin{smallmatrix}}{\end{smallmatrix}\right]}
\begin{document}
\maketitle
\centerline{\scshape Tomoki Ohsawa$^*$}
\medskip
{\footnotesize
  \centerline{Department of Mathematical Sciences}
  \centerline{The University of Texas at Dallas}
  \centerline{800 W Campbell Rd}
  \centerline{Richardson, TX 75080-3021, USA}
} 

\bigskip

 \centerline{(Communicated by the associate editor name)}

\begin{abstract}
  We present an account of dual pairs and the Kummer shapes for $n:m$ resonances that provides an alternative to Holm and Vizman's work.
  The advantages of our point of view are that the associated Poisson structure on $\mathfrak{su}(2)^{*}$ is the standard $(+)$-Lie--Poisson bracket independent of the values of $(n,m)$ as well as that the Kummer shape is regularized to become a sphere without any pinches regardless of the values of $(n,m)$.
  A similar result holds for $n:-m$ resonance with a paraboloid and $\mathfrak{su}(1,1)^{*}$.
  The result also has a straightforward generalization to multidimensional resonances as well.
\end{abstract}

\section{Introduction}
\subsection{Kummer Shapes and Dual Pairs in Resonances}
Hamiltonian systems with resonant symmetry have been studied quite extensively from many different perspectives.
Resonant symmetry crops up in many different forms of $\mathbb{S}^{1}$ symmetries.
Although it is one of the simplest symmetries geometrically, it is not only rich in examples and applications but also possesses interesting mathematical structures; see, e.g., Holm~\cite[Chapters~4--6]{Ho2011a}, Dullin~et~al.~\cite{DuGiCu2004}, Haller~\cite[Chapter~4]{Haller1999}, and references therein.

From the geometric point of view, Churchill et~al.~\cite{ChKuRo1983}, Kummer~\cite{Ku1981,Ku1986,Ku1987} made a seminal contribution by introducing what is now often referred to as the \textit{Kummer shapes}.
Recently Holm and Vizman~\cite{HoVi2012} discovered a Poisson-geometric structure behind the Kummer shapes by finding a \textit{dual pair} of Poisson maps (see, e.g., Weinstein~\cite{We1983} and Ortega and Ratiu~\cite[Chapter~11]{OrRa2004}) in $n:m$ resonances.

\subsection{Main Results and Outline}
We build on the work of Holm~\cite[Chapter~4]{Ho2011a} and Holm and Vizman~\cite{HoVi2012} to provide an alternative view of the dual pair constructed in \cite{HoVi2012} as well as of the Kummer shapes in $n:m$, $n:-m$, and multidimensional resonances.

Our approach is to relate $n:m$ resonance with any $(n,m) \in \N^{2}$ with the $1:1$ resonance case; this relationship along with the dual pair from \cite{HoVi2012} (see also Golubitsky et~al.~\cite{GoStMa1987}) for $1:1$ resonance naturally gives rise to the dual pair for $n:m$ resonance; see Theorem~\ref{thm:dual_pair-n:m}.
Our dual pair for $n:m$ resonances is slightly different from that of \cite{HoVi2012}.
Specifically, the Poisson structure on $\su(2)^{*}$ in our dual pair is the standard $(+)$-Lie--Poisson structure regardless of the values of $(n,m) \in \N^{2}$.
This is in contrast to the Poisson structure in \cite{HoVi2012} that depends on the values of $(n,m) \in \N^{2}$.
An advantage of this result is that the reduced dynamics in $\su(2)^{*}$ becomes a standard Lie--Poisson dynamics.

A byproduct of this construction is that the Kummer shapes---which usually arise as various shapes such as beet, lemon, onion, turnip, etc.~depending on the values of $n$ and $m$~\cite[Section~4.4.2]{Ho2011a}---are all ``regularized'' to become a sphere.

Section~\ref{sec:n:-m} shows that a similar approach works between $n:-m$ resonance and $1:-1$ resonance.
In this case, again all the Kummer shapes are regularized to become a paraboloid.

We also show, in Section~\ref{sec:MultiD_Resonance}, that the argument for $n:m$ resonances easily generalizes to multi-dimensional resonances.

\section{Kummer Shapes and Dual Pairs in $n:m$ Resonances}
We first briefly review Hamiltonian dynamics with $n:m$ resonant symmetry following Holm~\cite[Chapter~4]{Ho2011a} and Holm and Vizman~\cite{HoVi2012}.
We then find a Poisson map that provides a bridge between $n:m$ resonances and the $1:1$ resonance using a change of variables introduced in \cite[Section~A.5.4]{Ho2011a}.
This Poisson map naturally gives rise to a dual pair of Poisson maps for $n:m$ resonances with the standard $(+)$-Lie--Poisson bracket on $\su(2)^{*}$ by relating it to the dual pair for $1:1$ resonance from Golubitsky et~al.~\cite{GoStMa1987} and Holm and Vizman~\cite{HoVi2012}.
This gives an alternative account of the dual pairs in $n:m$ resonances that is slightly different from those in Holm and Vizman~\cite{HoVi2012}.
In fact, the Kummer shapes~\cite{Ku1981,ChKuRo1983,Ku1986,Ku1987} turn out to be spheres regardless of the values of $n$ and $m$.
We work out an example to illustrate this result, as well as extend the result to $n:-m$ resonances.

\subsection{{\boldmath $n:m$} Resonances}
Let $\mathbb{S}^{1} = \setdef{ e^{\rmi\theta} \in \C }{ \theta \in [0, 2\pi) }$ and $\C_{\times} \defeq \C\backslash\{0\}$ be the set of non-zero complex numbers, and set
\begin{equation*}
  \C_{\times}^{2} = \setdef{ \mathbf{a} = (a_{1},a_{2}) }{ a_{1}, a_{2} \in \C_{\times} }.
\end{equation*}
We equip the manifold $\C_{\times}^{2}$ with the symplectic form
\begin{equation}
  \label{eq:Omega-C2}
  \Omega_{\C_{\times}^{2}} \defeq -\frac{\rmi}{2} \sum_{j=1}^{2} \d a_{j} \wedge \d\bar{a}_{j} = -\d\Theta_{\C_{\times}^{2}},
\end{equation}
where
\begin{equation*}
  \Theta_{\C_{\times}^{2}} \defeq \frac{1}{2} \sum_{j=1}^{2} \Im(\bar{a}_{j}\d a_{j}).
\end{equation*}
The associated Poisson bracket is
\begin{equation*}
  \PB{F}{G}_{\C_{\times}^{2}} \defeq 2\rmi \sum_{j=1}^{2} \parentheses{
    \pd{F}{a_{j}} \pd{G}{\bar{a}_{j}} - \pd{G}{a_{j}} \pd{F}{\bar{a}_{j}}
  }.
\end{equation*}

Let $n, m \in \N$ be a pair of natural numbers and consider the following $\mathbb{S}^{1}$-action on $\C_{\times}^{2}$:
\begin{equation}
  \label{eq:Psi}
  \Psi^{n:m}_{(\cdot)}\colon \mathbb{S}^{1} \times \C_{\times}^{2} \to \C_{\times}^{2};
  \qquad
  (e^{\rmi\theta},(a_{1},a_{2})) \mapsto (e^{\rmi n\theta}a_{1}, e^{\rmi m\theta}a_{2}) \eqdef \Psi^{n:m}_{\theta}(\mathbf{a}).
\end{equation}
The corresponding infinitesimal generator is defined for any $\omega \in T_{1}\mathbb{S}^{1} \cong \R$ as follows:
\begin{equation*}
  \omega^{n:m}_{\C_{\times}^{2}}(\mathbf{a})
  = \left.\od{}{\eps} \Psi^{n:m}_{\eps\omega}(\mathbf{a}) \right|_{\eps=0}
  = \rmi\,\omega \parentheses{ n a_{1} \pd{}{a_{1}} + m a_{2} \pd{}{a_{2}} } + \text{c.c.},
\end{equation*}
where ``$\text{c.c.}$'' stands for the complex conjugate of the preceding terms.
This is essentially equivalent to the dynamics of two harmonic oscillators with frequencies $n$ and $m$:
\begin{equation}
  \label{eq:nm_resonant_dynamics}
  \dot{a}_{1} = \rmi n a_{1},
  \qquad
  \dot{a}_{2} = \rmi m a_{2}.
\end{equation}
One also sees that this is the Hamiltonian vector field corresponding to the function $(n |a_{1}|^{2} + m |a_{2}|^{2})/2$.

\subsection{{\boldmath$n:m$} Resonance vs.~{\boldmath$1:1$} Resonance}
Consider the map
\begin{equation}
  \label{eq:f_n:m}
  f_{n:m}\colon \C_{\times}^{2} \to \C_{\times}^{2};
  \qquad
  (a_{1},a_{2}) \mapsto \parentheses{ \frac{a_{1}^{m}}{\sqrt{m}\,|a_{1}|^{m-1}}, \frac{a_{2}^{n}}{\sqrt{n}\,|a_{2}|^{n-1}} }.
\end{equation}
This change or coordinates is briefly mentioned in Holm~\cite[Section~A.5.4]{Ho2011a}, and is a slight modification of the change of variables introduced in \cite[Section~4.4]{Ho2011a}, where $\sqrt{m}$ and $\sqrt{n}$ are $m$ and $n$ respectively instead.
Note that the map is not one-to-one and hence is not invertible in general.

Let $\mathbf{b} = (b_{1},b_{2})$ be the coordinates for the second copy of $\C_{\times}^{2}$, and equip $\C_{\times}^{2} = \{ (b_{1},b_{2}) \}$ with the same symplectic structure $\Omega_{\C_{\times}^{2}}$ defined in \eqref{eq:Omega-C2} above, and hence with the same Poisson bracket as the above, i.e.,
\begin{equation}
  \label{eq:PB-b}
  \PB{F}{G}_{\C_{\times}^{2}} \defeq 2\rmi \sum_{j=1}^{2}\parentheses{ \pd{F}{b_{j}} \pd{G}{\bar{b}_{j}} - \pd{G}{b_{j}} \pd{F}{\bar{b}_{j}} }.
\end{equation}
Then it is straightforward calculations (see the proof of Proposition~\ref{prop:f_n} below) to see that $f_{n:m}$ is a Poisson map, i.e.,
\begin{equation*}
  \PB{F\circ f_{n:m}}{G\circ f_{n:m}}_{\C_{\times}^{2}} = \PB{F}{G}_{\C_{\times}^{2}}\circ f_{n:m}.
\end{equation*}
One also sees that $f_{n:m}$ is a local symplectomorphism with respect to $\Omega_{\C_{\times}^{2}}$ as well, i.e., for any $\mathbf{a} \in \C_{\times}^{2}$, there exists an open neighborhood $U$ of $\mathbf{a}$ in $\C_{\times}^{2}$ such that $f_{n:m}|_{U}\colon U \to f_{n:m}(U)$ is symplectic.
In fact, $f_{n:m}$ is a local diffeomorphism because those distinct points $a_{1}, \tilde{a}_{1} \in \C_{\times}$ such that $a_{1}^{m}/(\sqrt{m}\,|a_{1}|^{m-1}) = \tilde{a}_{1}^{m}/(\sqrt{m}\,|\tilde{a}_{1}|^{m-1})$ are on the same circle (i.e., $|a_{1}| = |\tilde{a}_{1}|$) but are separated by angles $2k\pi/m$ with $k = 1, \dots, m-1$; the same goes with the second portion of $f_{n:m}$.
The (local) symplecticity follows from similar coordinate calculations as above; again see the proof of Proposition~\ref{prop:f_n} below for more details.

Let us also define $R_{n:m}\colon \C_{\times}^{2} \to \R$ by
\begin{equation*}
  R_{n:m}(\mathbf{a}) \defeq \frac{1}{2}\parentheses{ \frac{|a_{1}|^{2}}{m} + \frac{|a_{2}|^{2}}{n} }.
\end{equation*}
Clearly it satisfies $R_{n:m} = R_{1:1} \circ f_{n:m}$, and $n m R_{n:m}$ is the Hamiltonian function whose corresponding vector field gives \eqref{eq:nm_resonant_dynamics}, i.e., $R_{n:m}$ is essentially the momentum map corresponding to the action \eqref{eq:Psi}.

Now consider the following natural action of the special unitary group $\SU(2)$ on $\C_{\times}^{2}$:
\begin{equation}
  \label{eq:Phi-SU2}
  \Phi_{(\cdot)}\colon \SU(2) \times \C_{\times}^{2} \to \C_{\times}^{2};
  \qquad
  (U,\mathbf{b}) \mapsto U\mathbf{b} \eqdef \Phi_{U}(\mathbf{b}).
\end{equation}
It is then clear that $R_{1:1}$ is invariant under the action, i.e., $R_{1:1} \circ \Phi_{U} = R_{1:1}$ for any $U \in \SU(2)$.
The momentum map $\mathbf{J}_{1:1}\colon \C_{\times}^{2} \to \su(2)^{*}$ corresponding to the above action is then given by
\begin{align}
  \mathbf{J}_{1:1}(\mathbf{b})
  &= \rmi\parentheses{ \mathbf{b}\mathbf{b}^{*} - \frac{1}{2}|\mathbf{b}|^{2} I } \nonumber\\
  &= \rmi
    \begin{bmatrix}
      \frac{1}{2}(|b_{1}|^{2} - |b_{2}|^{2}) & b_{1}\bar{b}_{2} \smallskip\\
      b_{2}\bar{b}_{1} & -\frac{1}{2}(|b_{1}|^{2} - |b_{2}|^{2})
    \end{bmatrix} \nonumber\\
  &= \parentheses{ \Re(b_{1}\bar{b}_{2}),\, \Im(b_{1}\bar{b}_{2}),\, \frac{1}{2}(|b_{1}|^{2} - |b_{2}|^{2}) }.
    \label{eq:J_1:1}
\end{align}
See Lemma~\ref{lem:J_1} below for a generalization of this result and a proof.
Note that we also identified $\su(2) \cong \su(2)^{*}$ with $\R^{3}$ as follows:
\begin{equation*}
  \su(2)^{*} \cong \su(2) \to \R^{3};
  \qquad
  \rmi
  \begin{bmatrix}
    \xi_{3} & \xi_{1} + \rmi \xi_{2} \\
    \xi_{1} - \rmi \xi_{2} & -\xi_{3}
  \end{bmatrix}
  \mapsto \boldsymbol{\xi} = (\xi_{1}, \xi_{2}, \xi_{3}) .
\end{equation*}
Clearly $\mathbf{J}_{1:1}$ is equivariant, i.e., $\mathbf{J}_{1:1} \circ \Phi_{U} = \Ad_{U^{-1}}^{*}\mathbf{J}_{1:1}(\mathbf{b})$ for any $U \in \SU(2)$.

\subsection{The Lie--Poisson Bracket}
Let $\su(2)^{*}_{+}$ be $\su(2)^{*}$ equipped with $(+)$-Lie--Poisson bracket: For any $F, G \in C^{\infty}(\su(2)^{*})$,
\begin{equation}
  \label{eq:LPB-su2}
  \PB{F}{G}_{+}(\mu) \defeq \ip{\mu}{\left[\pd{F}{\mu}, \pd{G}{\mu}\right]}
  = 2 \boldsymbol{\mu} \cdot (\nabla F(\boldsymbol{\mu}) \times \nabla G(\boldsymbol{\mu})),
\end{equation}
where we identified $\su(2)^{*}$ with $\su(2)$ via the inner product
\begin{equation*}
  \ip{\xi}{\eta} \defeq \frac{1}{2}\tr(\xi^{*}\eta) = \boldsymbol{\xi} \cdot \boldsymbol{\eta}
\end{equation*}
on $\su(2)$ and hence $\su(2)^{*}$ is identified with $\R^{3}$ using the identification $\su(2) \cong \R^{3}$ above.
Hence
\begin{equation*}
  \mu = \begin{bmatrix}
    \mu_{3} & \mu_{1} + \rmi \mu_{2} \\
    \mu_{1} - \rmi \mu_{2} & -\mu_{3}
  \end{bmatrix} \in \su(2)^{*},
\end{equation*}
whereas $\boldsymbol{\mu} =  (\mu_{1}, \mu_{2}, \mu_{3}) \in \R^{3}$ here.
Note that the above Poisson bracket satisfy
\begin{equation*}
  \PB{\mu_{i}}{\mu_{j}}_{+} = 2\mu_{k}
\end{equation*}
for any even permutation $(i,j,k)$ of $(1,2,3)$.

Since the $\SU(2)$-action $\Phi$ defined in \eqref{eq:Phi-SU2} is a left action and the momentum map $\mathbf{J}_{1:1}\colon \C_{\times}^{2} \to \su(2)^{*}$ is equivariant, $\mathbf{J}_{1:1}$ is a Poisson map (see, e.g., Marsden and Ratiu~\cite[Theorem~12.4.1]{MaRa1999}) with respect to the Poisson bracket \eqref{eq:PB-b} and \eqref{eq:LPB-su2}, i.e., for any $F, G \in C^{\infty}(\su(2)^{*})$,
\begin{equation*}
  \PB{F}{G}_{+} \circ \mathbf{J}_{1:1} = \PB{F \circ \mathbf{J}_{1:1}}{G \circ \mathbf{J}_{1:1}}_{\C_{\times}^{2}}.
\end{equation*}
In fact, Holm and Vizman~\cite{HoVi2012} (see also \cite{GoStMa1987}) showed that $R_{1:1}$ and $\mathbf{J}_{1:1}$ form a dual pair of Poisson maps:
\begin{equation}
  \label{eq:dual_pair-1:1}
  \begin{tikzcd}[column sep=7ex, row sep=7ex]
    \R & (\C_{\times}^{2}, \Omega_{\C_{\times}^{2}}) \arrow[swap]{l}{R_{1:1}} \arrow{r}{\mathbf{J}_{1:1}} & \su(2)^{*}_{+},
  \end{tikzcd}
\end{equation}
that is, $(\ker T_{\mathbf{a}}R_{1:1})^{\Omega} = \ker T_{\mathbf{a}}\mathbf{J}_{1:1}$ for any $\mathbf{a} \in \C_{\times}^{2}$.

\subsection{{\boldmath $n:m$} Resonance Invariants}
Let us combine the map $f_{n:m}$ from \eqref{eq:f_n:m} and the momentum map $\mathbf{J}_{1:1}$ from \eqref{eq:J_1:1} to define
\begin{equation*}
  \mathbf{J}_{n:m}\colon \C_{\times}^{2} \to \su(2)^{*} \cong \R^{3};
  \qquad
  \mathbf{J}_{n:m} \defeq \mathbf{J}_{1:1} \circ f_{n:m}.
\end{equation*}
In coordinates, we have
\begin{multline*}
  \mathbf{J}_{n:m}(\mathbf{a}) = \Bigl(
    \Re\parentheses{ \frac{a_{1}^{m} \bar{a}_{2}^{n}}{\sqrt{n m} |a_{1}|^{m-1} |a_{2}|^{n-1}} },\,
    \Im\parentheses{ \frac{a_{1}^{m} \bar{a}_{2}^{n}}{\sqrt{n m} |a_{1}|^{m-1} |a_{2}|^{n-1}} },\, \\
    \frac{1}{2}\parentheses{ \frac{|a_{1}|^{2}}{m} - \frac{|a_{2}|^{2}}{n} }
  \Bigr).
\end{multline*}
These are essentially the ``invariants'' (of \eqref{eq:nm_resonant_dynamics} but not necessarily invariants of a general Hamiltonian system in $n:m$ resonance) from \cite[Proposition~4.4.1 on p.~266]{Ho2011a} although the expressions are slightly different.

Note that $\mathbf{J}_{n:m}$ is also slightly different from the corresponding map $\Pi$ in Holm and Vizman~\cite{HoVi2012} as well.
This difference leads to an alternative construction of a dual map as well as different Kummer shapes as we shall see in the next subsection.

\subsection{Dual Pairs and Kummer Shapes}
We are now ready to describe our account of dual pairs and Kummer shapes in $n:m$ resonances.
Specifically, our result identifies a relationship between the dual pair~\eqref{eq:dual_pair-1:1} of the $1:1$ resonance and $n:m$ resonances as well as the momentum map origin of the dual pairs of Poisson maps for $n:m$ resonances.
\begin{theorem}
  \label{thm:dual_pair-n:m}
  The Poisson maps $R_{n:m}\colon \C_{\times}^{2} \to \R$ and $\mathbf{J}_{n:m}\colon \C_{\times}^{2} \to \su(2)^{*}_{+}$ are a dual pair for any pair of natural numbers $(n,m) \in \N^{2}$, i.e., for any $\mathbf{a} \in \C_{\times}^{2}$, $\ker T_{\mathbf{a}}R_{n:m}$ and $\ker T_{\mathbf{a}}\mathbf{J}_{n:m}$ are symplectic orthogonal complements to each other.
  Moreover, the dual pair of Poisson maps for $n:m$ resonances is related to the dual pair of momentum maps $R_{1:1}$ and $\mathbf{J}_{1:1}$ as is shown in the diagram below.
  \begin{equation*}
    \begin{tikzcd}[column sep=9ex, row sep=9ex]
      & (\C_{\times}^{2}, \Omega_{\C_{\times}^{2}}) \arrow{d}{f_{n:m}} \arrow[swap]{dl}{R_{n:m}} \arrow{dr}{\mathbf{J}_{n:m}}& \\
      \R & (\C_{\times}^{2}, \Omega_{\C_{\times}^{2}}) \arrow{l}{R_{1:1}} \arrow[swap]{r}{\mathbf{J}_{1:1}} & \su(2)^{*}_{+}
    \end{tikzcd}
  \end{equation*}
\end{theorem}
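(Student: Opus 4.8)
The plan is to obtain the dual pair for $n:m$ resonance by transporting the $1:1$ dual pair \eqref{eq:dual_pair-1:1} through the map $f_{n:m}$, exploiting that $f_{n:m}$ is a local symplectomorphism. The commutativity of the displayed diagram is immediate from the constructions: $\mathbf{J}_{n:m}$ is \emph{defined} to be $\mathbf{J}_{1:1}\circ f_{n:m}$, and plugging \eqref{eq:f_n:m} into the formula for $R_{1:1}$ gives $R_{1:1}(f_{n:m}(\mathbf{a})) = \frac{1}{2}\parentheses{|a_{1}|^{2}/m + |a_{2}|^{2}/n} = R_{n:m}(\mathbf{a})$. Moreover, since $f_{n:m}$ is a Poisson map and both $R_{1:1}$ and $\mathbf{J}_{1:1}$ are Poisson maps (the former trivially, the latter by equivariance of the momentum map), the compositions $R_{n:m}$ and $\mathbf{J}_{n:m}$ are Poisson maps as well. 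The one substantive point is the symplectic-orthogonality condition $(\ker T_{\mathbf{a}}R_{n:m})^{\Omega} = \ker T_{\mathbf{a}}\mathbf{J}_{n:m}$ for every $\mathbf{a} \in \C_{\times}^{2}$.

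For this I would use that, because $f_{n:m}$ is a local symplectomorphism (Proposition~\ref{prop:f_n}), at each $\mathbf{a} \in \C_{\times}^{2}$ the tangent map $\phi \defeq T_{\mathbf{a}}f_{n:m}$ is a linear \emph{symplectic isomorphism} from $(T_{\mathbf{a}}\C_{\times}^{2}, \Omega_{\C_{\times}^{2}})$ onto $(T_{f_{n:m}(\mathbf{a})}\C_{\times}^{2}, \Omega_{\C_{\times}^{2}})$. From $R_{n:m} = R_{1:1}\circ f_{n:m}$, $\mathbf{J}_{n:m} = \mathbf{J}_{1:1}\circ f_{n:m}$, the chain rule, and the invertibility of $\phi$, I get
\begin{equation*}
  \ker T_{\mathbf{a}}R_{n:m} = \phi^{-1}\parentheses{\ker T_{f_{n:m}(\mathbf{a})}R_{1:1}},
  \qquad
  \ker T_{\mathbf{a}}\mathbf{J}_{n:m} = \phi^{-1}\parentheses{\ker T_{f_{n:m}(\mathbf{a})}\mathbf{J}_{1:1}}.
\end{equation*}
A linear symplectic isomorphism intertwines the symplectic-orthogonal-complement operation --- that is, $(\phi^{-1}W)^{\Omega} = \phi^{-1}(W^{\Omega})$ for every subspace $W$ of the target, which is a one-line consequence of $\phi^{*}\Omega_{\C_{\times}^{2}} = \Omega_{\C_{\times}^{2}}$ --- so, applying the $1:1$ dual pair identity $(\ker T_{\mathbf{b}}R_{1:1})^{\Omega} = \ker T_{\mathbf{b}}\mathbf{J}_{1:1}$ at the point $\mathbf{b} = f_{n:m}(\mathbf{a}) \in \C_{\times}^{2}$, I obtain
\begin{equation*}
  (\ker T_{\mathbf{a}}R_{n:m})^{\Omega}
  = \phi^{-1}\parentheses{(\ker T_{f_{n:m}(\mathbf{a})}R_{1:1})^{\Omega}}
  = \phi^{-1}\parentheses{\ker T_{f_{n:m}(\mathbf{a})}\mathbf{J}_{1:1}}
  = \ker T_{\mathbf{a}}\mathbf{J}_{n:m},
\end{equation*}
which is precisely the dual pair property.

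Thus, within the proof of the theorem itself there is essentially no obstacle: the argument is purely formal once the properties of $f_{n:m}$ are available. The hard part --- to the extent there is one --- sits entirely in Proposition~\ref{prop:f_n}, i.e.\ in checking that $f_{n:m}$ is a local diffeomorphism and that $f_{n:m}^{*}\Omega_{\C_{\times}^{2}} = \Omega_{\C_{\times}^{2}}$. This is exactly where the normalization by $\sqrt{m}$ and $\sqrt{n}$ in \eqref{eq:f_n:m}, rather than by $m$ and $n$ as in \cite[Section~4.4]{Ho2011a}, is indispensable: in polar coordinates $a_{j} = r_{j}e^{\rmi\varphi_{j}}$ one has $\Omega_{\C_{\times}^{2}} = -\frac{1}{2}\d(r_{1}^{2})\wedge\d\varphi_{1} - \frac{1}{2}\d(r_{2}^{2})\wedge\d\varphi_{2}$, while $f_{n:m}$ acts by $(r_{1},\varphi_{1})\mapsto(r_{1}/\sqrt{m},\,m\varphi_{1})$ and $(r_{2},\varphi_{2})\mapsto(r_{2}/\sqrt{n},\,n\varphi_{2})$, so the factor $1/m$ acquired by the squared radial coordinate exactly cancels the factor $m$ from the rescaled angle and $\Omega_{\C_{\times}^{2}}$ is preserved; the choice $m$ (resp.\ $n$) would instead leave an overall factor $1/m$ (resp.\ $1/n$) and yield only conformal symplecticity. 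The two minor points worth noting in the write-up are that the $1:1$ dual pair identity is legitimately invoked pointwise at $f_{n:m}(\mathbf{a})$ because $f_{n:m}$ maps $\C_{\times}^{2}$ into $\C_{\times}^{2}$, and that $f_{n:m}$ being a local \emph{diffeomorphism} --- not just a surjective submersion --- is what makes the kernel identities above exact equalities of subspaces rather than mere inclusions.
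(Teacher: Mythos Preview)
Your proposal is correct and follows essentially the same approach as the paper's proof: both pull back the $1{:}1$ dual pair through $f_{n:m}$, use the chain rule together with the invertibility of $T_{\mathbf{a}}f_{n:m}$ to write $\ker T_{\mathbf{a}}R_{n:m}$ and $\ker T_{\mathbf{a}}\mathbf{J}_{n:m}$ as preimages under $T_{\mathbf{a}}f_{n:m}$, and then invoke the local symplectomorphism property to transport the symplectic-orthogonality relation. Your write-up is in fact a bit more explicit than the paper's---you spell out that linear symplectic isomorphisms intertwine the $(\,\cdot\,)^{\Omega}$ operation and you give the polar-coordinate explanation for why the $\sqrt{m},\sqrt{n}$ normalization is needed---but these are elaborations rather than a different argument.
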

\begin{proof}
  We know from Holm and Vizman~\cite[Theorem~3.1]{HoVi2012} that the bottom part constitutes a dual pair: For any $\mathbf{b} \in \C_{\times}^{2}$, $\ker T_{\mathbf{b}}R_{1:1}$ and $\ker T_{\mathbf{b}}\mathbf{J}_{1:1}$ are symplectic orthogonal complements to each other with respect to $\Omega_{\C_{\times}^{2}}$, i.e., $(\ker T_{\mathbf{b}}R_{1:1})^{\Omega} = \ker T_{\mathbf{b}}\mathbf{J}_{1:1}$.
  However, since $R_{n:m} = R_{1:1} \circ f_{n:m}$, we see that, for any $\mathbf{a} \in \C_{\times}^{2}$,
  \begin{equation*}
    T_{\mathbf{a}}R_{n:m} =  T_{f_{n:m}(\mathbf{a})}R_{1:1} \circ T_{\mathbf{a}}f_{n:m}.
  \end{equation*}
  Now recall that $f_{n:m}$ is a local diffeomorphism; so we have
  \begin{equation*}
    \ker T_{\mathbf{a}}R = (T_{\mathbf{a}}f_{n:m})^{-1}(\ker T_{f_{n:m}(\mathbf{a})}R_{1:1}).
  \end{equation*}
  Similarly,
  \begin{equation*}
    \ker T_{\mathbf{a}}\mathbf{J}_{n:m} = (T_{\mathbf{a}}f_{n:m})^{-1}(\ker T_{f_{n:m}(\mathbf{a})}\mathbf{J}_{1:1})
  \end{equation*}
  because $\mathbf{J}_{n:m} = \mathbf{J}_{1:1} \circ f_{n:m}$.
  Since $f_{n:m}$ is a local symplectomorphism with respect to $\Omega_{\C_{\times}^{2}}$, we conclude that $(\ker T_{\mathbf{a}}R_{n:m})^{\Omega} = \ker T_{\mathbf{a}}\mathbf{J}_{n:m}$ for any $\mathbf{a} \in \C_{\times}^{2}$.
\end{proof}

Basic results on dual pairs (see  Weinstein~\cite{We1983} and Ortega and Ratiu~\cite[Chapter~11]{OrRa2004}) imply that the image $\mathbf{J}_{n:m}(R_{n:m}^{-1}(r))$ of the level set $R_{n:m}^{-1}(r)$ of $R_{n:m}$ at any $r > 0$ under the map $\mathbf{J}_{n:m}$ is a symplectic leaf in the image of $\mathbf{J}_{n:m}$ in $\su(2)^{*}$.
This is what Holm~\cite[Section~4.4]{Ho2011a} refers to as an \textit{orbit manifold} or \textit{Kummer shape}.

What does the Kummer shape look like in this setting?
It is well known that $\SU(2)$ is a double cover of $\SO(3)$ and the coadjoint action of $\SU(2)$ in $\su(2)^{*} \cong \R^{3}$ is written as rotations in $\R^{3}$ by corresponding elements in $\SO(3)$, and hence the coadjoint orbit in $\su(2)^{*} \cong \R^{3}$ are spheres; these are the symplectic leaves in $\su(2)^{*}$ or the Kummer shape here.
In fact, setting $\mu = \mathbf{J}_{n:m}(\mathbf{a})$, we see that
\begin{equation*}
  \mu_{1}^{2} + \mu_{2}^{2} + \mu_{3}^{2}
  = R_{n:m}(\mathbf{a})^{2}.
\end{equation*}
Therefore, for any pair $(n,m) \in \N^{2}$, the Kummer shape $\mathbf{J}_{n:m}(R_{n:m}^{-1}(r))$ is a sphere without the north and south poles (which correspond to those cases with $a_{2} = 0$ and $a_{1} = 0$ respectively that were removed from the outset).
To summarize:
\begin{corollary}[Regularization of Kummer shape]
  The Kummer shape formed in $\su(2)^{*}$ using the dual pair from Theorem~\ref{thm:dual_pair-n} is the sphere with radius $R_{n:m}(\mathbf{a})$ centered at the origin with the north and south poles removed for any $(n,m) \in \N^{2}$.
\end{corollary}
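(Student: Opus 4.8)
The plan is to deduce the statement from the dual-pair property established in Theorem~\ref{thm:dual_pair-n:m} together with one short algebraic identity. First I would recall the general structure theory of dual pairs (Weinstein~\cite{We1983}, Ortega and Ratiu~\cite[Chapter~11]{OrRa2004}): for the dual pair $R_{n:m}\colon \C_{\times}^{2}\to\R$ and $\mathbf{J}_{n:m}\colon \C_{\times}^{2}\to\su(2)^{*}_{+}$, the image $\mathbf{J}_{n:m}\parentheses{R_{n:m}^{-1}(r)}$ of a level set of $R_{n:m}$ is a symplectic leaf of the Poisson submanifold $\mathbf{J}_{n:m}(\C_{\times}^{2})\subseteq\su(2)^{*}_{+}$; this leaf is by definition the Kummer shape. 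Since $\SU(2)$ double covers $\SO(3)$ and its coadjoint action on $\su(2)^{*}\cong\R^{3}$ is by rotations, the symplectic leaves of $\su(2)^{*}_{+}$ are precisely the origin and the spheres centered at the origin, so it only remains to identify which sphere, and which subset of it, occurs.

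Second, I would carry out the Casimir computation indicated in the text: writing $\boldsymbol{\mu}=\mathbf{J}_{n:m}(\mathbf{a})$ in the coordinates displayed above and using $\abs{a_{1}^{m}\bar a_{2}^{n}}^{2}=\abs{a_{1}}^{2m}\abs{a_{2}}^{2n}$, one gets $\mu_{1}^{2}+\mu_{2}^{2}=\abs{a_{1}}^{2}\abs{a_{2}}^{2}/(nm)$ and hence
\[
  \mu_{1}^{2}+\mu_{2}^{2}+\mu_{3}^{2}
  = \frac{\abs{a_{1}}^{2}\abs{a_{2}}^{2}}{nm}
    + \frac{1}{4}\parentheses{\frac{\abs{a_{1}}^{2}}{m}-\frac{\abs{a_{2}}^{2}}{n}}^{2}
  = \frac{1}{4}\parentheses{\frac{\abs{a_{1}}^{2}}{m}+\frac{\abs{a_{2}}^{2}}{n}}^{2}
  = R_{n:m}(\mathbf{a})^{2}.
\]
Thus the Kummer shape is contained in the sphere of radius $r=R_{n:m}(\mathbf{a})$; being at the same time a symplectic leaf of the image, it is an open, connected, coadjoint-invariant subset of that sphere.

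Third, for the claim that the image is \emph{exactly} the sphere minus its two poles, the shortest route is to reduce to the $1:1$ case. The map $\C_{\times}\to\C_{\times}$, $a_{1}\mapsto a_{1}^{m}/(\sqrt{m}\,\abs{a_{1}}^{m-1})$, is onto (it scales moduli by $1/\sqrt{m}$ and is surjective on arguments), and likewise for the $a_{2}$-factor, so $f_{n:m}$ from~\eqref{eq:f_n:m} is surjective; combined with $R_{n:m}=R_{1:1}\circ f_{n:m}$ and $\mathbf{J}_{n:m}=\mathbf{J}_{1:1}\circ f_{n:m}$ this gives $\mathbf{J}_{n:m}\parentheses{R_{n:m}^{-1}(r)}=\mathbf{J}_{1:1}\parentheses{R_{1:1}^{-1}(r)}$. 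On $R_{1:1}^{-1}(r)=\setdef{\mathbf{b}}{\abs{b_{1}}^{2}+\abs{b_{2}}^{2}=2r}$, formula~\eqref{eq:J_1:1} shows $\mu_{3}=\tfrac12(\abs{b_{1}}^{2}-\abs{b_{2}}^{2})$ ranges over the \emph{open} interval $(-r,r)$ — open precisely because $b_{1},b_{2}\in\C_{\times}$ — while for each admissible $\mu_{3}$ the pair $(\mu_{1},\mu_{2})=(\Re(b_{1}\bar b_{2}),\Im(b_{1}\bar b_{2}))$ sweeps the full circle of radius $\abs{b_{1}}\abs{b_{2}}=\sqrt{r^{2}-\mu_{3}^{2}}$ as $\arg b_{1}-\arg b_{2}$ runs over $[0,2\pi)$. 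Hence the image is exactly $\setdef{\boldsymbol{\mu}}{\abs{\boldsymbol{\mu}}=r,\ \abs{\mu_{3}}<r}$, i.e.\ the sphere of radius $r$ with its north and south poles $(0,0,\pm r)$ removed; these would be the images of points with $b_{2}=0$ or $b_{1}=0$, which were excised from $\C_{\times}^{2}$ at the outset.

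The only genuine obstacle is this last surjectivity step, and within it the need to verify that $\mu_{3}=\pm r$ is truly unattainable: that is exactly the nonvanishing condition $a_{1},a_{2}\neq 0$ built into $\C_{\times}^{2}$, which is what creates the two punctures. Everything else is the cited dual-pair machinery plus the one-line identity above; no analytic estimates or limiting arguments are needed. One could equally avoid the reduction to $1:1$ by running the same radius-and-phase parametrization directly in $\mathbf{a}$, using that $\arg(a_{1}^{m}\bar a_{2}^{n})$ covers all of $[0,2\pi)$ as $\arg a_{1},\arg a_{2}$ vary.
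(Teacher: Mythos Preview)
Your proposal is correct and follows essentially the same approach as the paper: invoke the dual-pair structure to identify the Kummer shape as a symplectic leaf, verify the Casimir identity $\mu_{1}^{2}+\mu_{2}^{2}+\mu_{3}^{2}=R_{n:m}(\mathbf{a})^{2}$ to pin down the radius, and observe that the poles correspond to the excluded points $a_{1}=0$ or $a_{2}=0$. Your third step---the explicit surjectivity argument via the reduction $\mathbf{J}_{n:m}(R_{n:m}^{-1}(r))=\mathbf{J}_{1:1}(R_{1:1}^{-1}(r))$ and the phase-radius parametrization---is a welcome elaboration that the paper leaves implicit, but it does not constitute a different route.
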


\begin{remark}
  This result is seemingly contradictory to those from \cite[Section~4.4.2]{Ho2011a} and \cite{HoVi2012} that the Kummer shapes take all kinds of different pinched spheres such as beet, lemon, onion, turnip, etc.~depending on the values of $n$ and $m$.
  The reason for this apparent contradiction is that our definition of the Poisson map $\mathbf{J}_{n:m}$ is slightly different from theirs, and the map regularizes or un-pinches these various Kummer shapes in their setting to spheres.
\end{remark}

As stated above, an advantage of our setting is that the Poisson structure in $\su(2)^{*}$ is simple and standard---the $(+)$-Lie--Poisson structure on $\su(2)^{*}$---as well as independent of $n$ and $m$, whereas the Poisson structure from \cite{Ho2011a,HoVi2012} is more complicated and dependent on the values of $n$ and $m$.
As a result, a Hamiltonian dynamics in $\C_{\times}^{2}$ with $n:m$ resonant symmetry is reduced to the Lie--Poisson equation
\begin{equation*}
  \dot{\mu} = \PB{\mu}{h}_{+}(\mu)
\end{equation*}
or in the vector form,
\begin{equation}
  \label{eq:LPB-su2_coorinates}
  \dot{\boldsymbol{\mu}} = - 2 \boldsymbol{\mu} \times \nabla h(\boldsymbol{\mu}).
\end{equation}
in $\su(2)^{*}$, where $h\colon \su(2)^{*} \to \R$ is the reduced Hamiltonian defined as $h \circ \mathbf{J}_{n:m} = H$.
The Kummer shape is an invariant submanifold of the dynamics.
More specifically, the Kummer shape as the coadjoint orbit in $\su(2)^{*}$ and regard the above Lie--Poisson system as a Hamiltonian system with respect to the Kirillov--Kostant--Souriau structure (see, e.g., Kirillov~\cite[Chapter~1]{Ki2004} and Marsden and Ratiu~\cite[Chapter~14]{MaRa1999} and references therein) on $\su(2)^{*}$.

The disadvantage of our approach is that the expression for the Hamiltonian $h$ tends to get complicated because of the expression for $\mathbf{J}_{n:m}$.
So it is a trade-off between the simplicities of the reduced Hamiltonian $h$ and the Poisson bracket in $\su(2)^{*}$.

\begin{example}[1:2 resonance]
  We consider the dynamics in $\C_{\times}^{2}$ with respect to the symplectic structure~\eqref{eq:Omega-C2} and the Hamiltonian
  \begin{equation*}
    H(\mathbf{a}) = \Re(a_{1}^{2}\bar{a}_{2}).
  \end{equation*}
  The Hamiltonian system $\ins{X_{H}}\Omega_{\C_{\times}^{2}} = \d{H}$ yields
  \begin{equation*}
    \dot{a}_{1} = 2\rmi\,\bar{a}_{1} a_{2},
    \qquad
    \dot{a}_{2} = \rmi\, a_{1}^{2}.
  \end{equation*}
  Clearly the Hamiltonian $H$ has the $1:2$ resonant symmetry, i.e., $H \circ \Psi^{1:2}_{\theta} = H$ for any $e^{\rmi\theta} \in \mathbb{S}^{1}$ (see \eqref{eq:Psi} for the definition of the action $\Psi$), and thus 
  \begin{equation*}
    R_{1:2}(\mathbf{a}) = \frac{1}{2}\parentheses{ \frac{|a_{1}|^{2}}{2} + |a_{2}|^{2} }
  \end{equation*}
  is conserved along the dynamics.
  On the other hand, the map $\mathbf{J}_{1:2}\colon \C_{\times}^{2} \to \su(2)^{*}$ takes the form
  \begin{equation*}
    \mathbf{J}_{1:2}(\mathbf{a}) = \parentheses{
      \Re\parentheses{ \frac{a_{1}^{2} \bar{a}_{2}}{\sqrt{2} |a_{1}|} },\,
      \Im\parentheses{ \frac{a_{1}^{2} \bar{a}_{2}}{\sqrt{2} |a_{1}|} },\,
      \frac{1}{2}\parentheses{ \frac{|a_{1}|^{2}}{2} - |a_{2}|^{2} }
    }.
  \end{equation*}
  
  Let us define the Hamiltonian $h\colon \su(2)^{*} \to \R$ by $h \circ \mathbf{J}_{1:2} = H$.
  This yields
  \begin{equation*}
    h(\mu) = 2\mu_{1}\sqrt{ \norm{\mu} + \mu_{3} },
  \end{equation*}
  where $\norm{\mu} = \sqrt{\mu_{1}^{2} + \mu_{2}^{2} + \mu_{3}^{2}}$.
  Then the Kummer shape is defined by $\norm{\mu} = r$ for the constant $r \defeq R_{1:2}(\mathbf{a}_{0})$ defined by the initial condition $\mathbf{a}_{0} \in \C_{\times}^{2}$ for the above dynamics.
  
  Now, Theorem~\ref{thm:dual_pair-n:m} implies that setting $\mu = (\mu_{1},\mu_{2},\mu_{3}) = \mathbf{J}_{1:2}(\mathbf{a}) \in \su(2)^{*}$ reduces the dynamics to a Lie--Poisson dynamics in $\su(2)^{*}$---more specifically on the coadjoint orbit or the Kummer shape $\norm{\mu} = c$---with respect to the Lie--Poisson bracket~\eqref{eq:LPB-su2} and the above Hamiltonian $h$.
  In fact, the Lie--Poisson equation~\eqref{eq:LPB-su2_coorinates} yields
  \begin{equation}
    \label{eq:reduced_1:2-dynamics}
    \dot{\mu}_{1} = -\frac{2 \mu_{1} \mu_{2}}{\sqrt{r + \mu_{3}}},
    \qquad
    \dot{\mu}_{2} = \frac{2 \mu_{1}^{2} - 4\mu_{3}(r + \mu_{3})}{\sqrt{r + \mu_{3}}},
    \qquad
    \dot{\mu}_{3} = 4\mu_{2}\sqrt{r + \mu_{3}}
  \end{equation}
  on the Kummer shape $\norm{\mu} = r$.
  
  The orbit of the above Lie--Poisson dynamics is given by the intersection of the sphere $\norm{\mu} = r$ and the level set of the Hamiltonian $h$; see Fig.~\ref{fig:1:2_resonance}.
  On the other hand, the standard Kummer shape in the 1:2 resonance would be a ``turnip''~\cite[Section~4.4.2]{Ho2011a}, i.e., one of the poles of the sphere is pinched, and the Poisson bracket in the reduced space $\su(2)^{*}$ is \textit{not} the standard Lie--Poisson bracket; see Holm and Vizman~\cite{HoVi2012}.
  \begin{figure}[htbp]
    \centering
    \includegraphics[width=.5\linewidth]{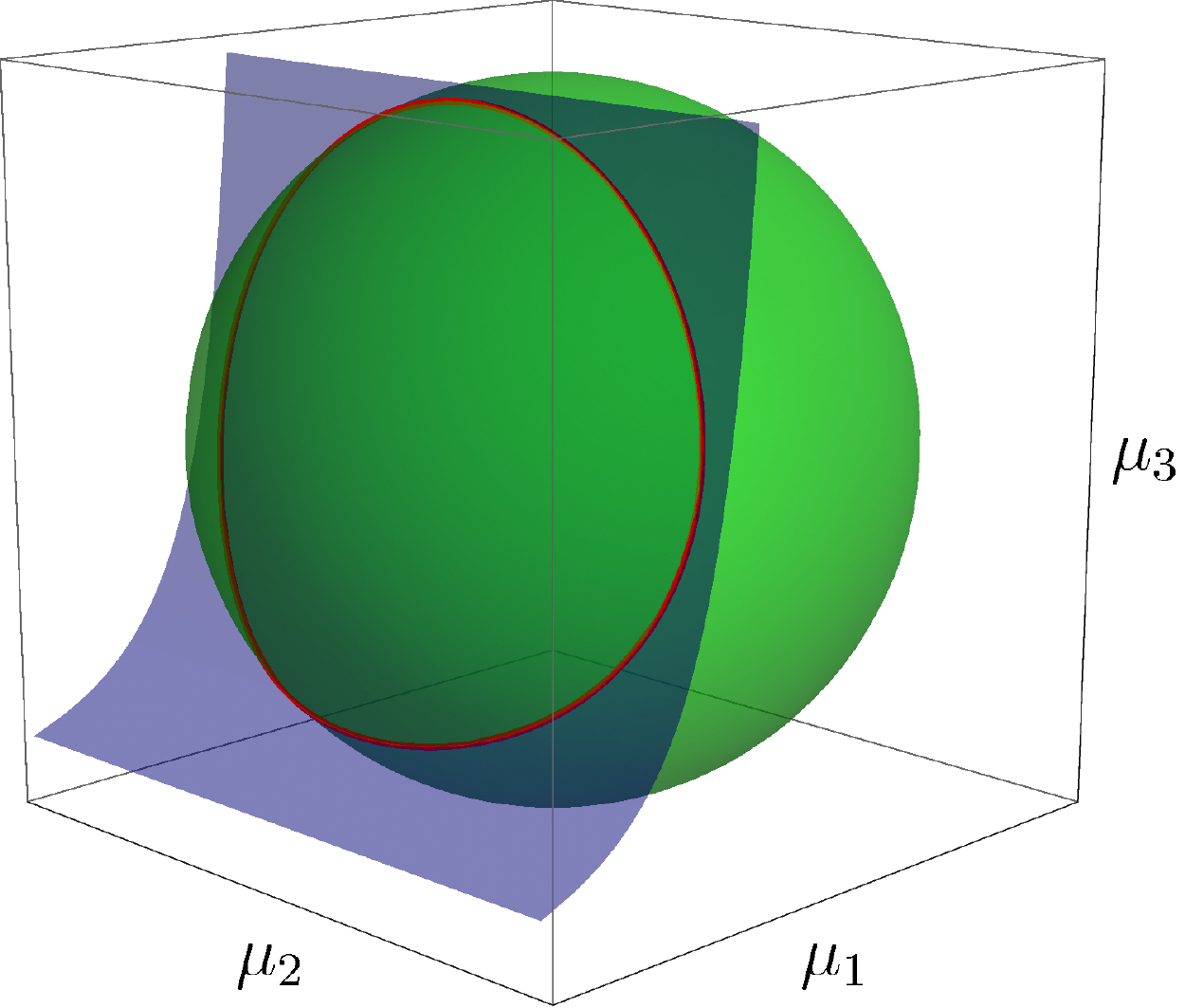}
    \caption{
      The Kummer shape is regularized to be the sphere (green), and the reduced dynamics (red)~\eqref{eq:reduced_1:2-dynamics} is at the intersection of the sphere and the level set (blue) of the Hamiltonian $h$.
   }
    \label{fig:1:2_resonance}
  \end{figure}
\end{example}

\subsection{{\boldmath $n:-m$} Resonances}
\label{sec:n:-m}
We may easily extend the above construction to those cases where one of the frequencies of resonance is negative.
Without loss of generality, let us consider $n:-m$ resonances with $n, m \in \N$.
So we consider the action
\begin{equation*}
  \Psi^{n:-m}_{(\cdot)}\colon \mathbb{S}^{1} \times \C_{\times}^{2} \to \C_{\times}^{2};
  \qquad
  (e^{\rmi\theta},(a_{1},a_{2})) \mapsto (e^{\rmi n\theta}a_{1}, e^{-\rmi m\theta}a_{2}).
\end{equation*}
on $\C_{\times}^{2}$ equipped with \eqref{eq:Omega-C2}.
However, equivalently, one may redefine $\bar{a}_{2}$ as $a_{2}$ and instead consider the action $\Psi^{n:m}$ given in \eqref{eq:Psi} on $\C_{\times}^{2}$ equipped with the symplectic form
\begin{equation*}
  \Omega_{\C_{\times}^{2}}^{1:-1} \defeq -\frac{\rmi}{2} \sum_{j=1}^{2} k_{j} \d b_{j} \wedge \d\bar{b}_{j}
  = -\d\Theta_{\C_{\times}^{2}}^{1:-1}
\end{equation*}
with $(k_{1},k_{2}) = (1,-1)$, where
\begin{equation*}
  \Theta_{\C_{\times}^{2}}^{1:-1} \defeq \frac{1}{2} \sum_{j=1}^{2} k_{j} \Im(\bar{b}_{j}\d b_{j}).
\end{equation*}

It is a straightforward computation as in $n:m$ resonances to check that $f_{n:m}$ is a local symplectomorphism with respect to $\Omega_{\C_{\times}^{2}}^{1:-1}$ as well as that $f_{n:m}$ is Poisson with respect to the corresponding Poisson bracket:
Defining
\begin{equation*}
  \PB{F}{G}_{\C_{\times}^{2}}^{1:-1} \defeq 2\rmi \sum_{j=1}^{2} k_{j} \parentheses{ \pd{F}{b_{j}} \pd{G}{\bar{b}_{j}} - \pd{G}{b_{j}} \pd{F}{\bar{b}_{j}} }
\end{equation*}
with $(k_{1},k_{2}) = (1,-1)$, we have
\begin{equation*}
  \PB{F\circ f_{n:m}}{G\circ f_{n:m}}_{\C_{\times}^{2}}^{1:-1} = \PB{F}{G}_{\C_{\times}^{2}}^{1:-1}\circ f_{n:m}.
\end{equation*}

We also define $R_{n:-m}\colon \C_{\times}^{2} \to \R$ as
\begin{equation*}
  R_{n:-m}(\mathbf{a}) \defeq \frac{1}{2}\parentheses{ \frac{|a_{1}|^{2}}{m} - \frac{|a_{2}|^{2}}{n} },
\end{equation*}
which satisfies $R_{n:-m} = R_{1:-1} \circ f_{n:m}$.

Let $K \defeq \diag(k_{1},k_{2}) =
\begin{tbmatrix}
  1 & 0 \\
  0 & -1
\end{tbmatrix}$ and 
\begin{align*}
  \SU(1,1)
  &\defeq
    \setdef{
    U \in \C^{2\times 2}
    }{ U^{*} K U = K,\, \det U = 1 } \\
  &= \setdef{
    \begin{bmatrix}
      \alpha & \beta \\
      \bar{\beta}  & \bar{\alpha}
    \end{bmatrix}
                     }{ \alpha, \beta \in \C,\, |\alpha|^{2} - |\beta|^{2} = 1 },
\end{align*}
and consider the natural action of $\SU(1,1)$ on $(\C_{\times}^{2}, \Omega_{\C_{\times}^{2}}^{1:-1})$.
Then the corresponding momentum map $\mathbf{J}_{1:-1}\colon \C_{\times}^{2} \to \su(1,1)^{*}$ is given by
\begin{align*}
  \mathbf{J}_{1:-1}(\mathbf{b})
  &= \rmi \parentheses{ K \mathbf{b}\mathbf{b}^{*} - \frac{1}{2}\tr(K \mathbf{b}\mathbf{b}^{*}) I } \\
  &= \parentheses{
    \Re(b_{1}\bar{b}_{2}),\,
    -\Im(b_{1}\bar{b}_{2}),\,
    \frac{1}{2} (|b_{1}|^{2} + |b_{2}|^{2})
  },
\end{align*}
It is clearly equivariant and thus $\mathbf{J}_{1:-1}$ is a Poisson map with respect to $\Omega_{\C_{\times}^{2}}^{1:-1}$ and the $(+)$-Lie--Poisson bracket on $\su(1,1)^{*}$.
We denote $\su(1,1)^{*}$ with the $(+)$-Lie--Poisson bracket by $\su(1,1)^{*}_{+}$ below.

As shown in Holm and Vizman~\cite[Theorem~8.1]{HoVi2012} (see also Iwai~\cite{Iw1985}), $R_{1:-1}$ and $\mathbf{J}_{1:-1}$ constitute a dual pair.
Hence so do $R_{n:-m}$ and $\mathbf{J}_{n:-m}$ as well, following the same argument as in $n:m$ resonance case.
The diagram below summarizes this result.
\begin{equation*}
  \begin{tikzcd}[column sep=9ex, row sep=9ex]
    & (\C_{\times}^{2}, \Omega_{\C_{\times}^{2}}^{1:-1}) \arrow{d}{f_{n:m}} \arrow[swap]{dl}{R_{n:-m}} \arrow{dr}{\mathbf{J}_{n:-m}}& \\
    \R & (\C_{\times}^{2}, \Omega_{\C_{\times}^{2}}^{1:-1}) \arrow{l}{R_{1:-1}} \arrow[swap]{r}{\mathbf{J}_{1:-1}} & \su(1,1)^{*}_{+}
  \end{tikzcd}
\end{equation*}

The Kummer shape in this case is a paraboloid for any $(n,m) \in \N^{2}$.
In fact, setting $\mu = \mathbf{J}_{n:-m}(\mathbf{a})$, we have
\begin{equation*}
  \mu_{3}^{2} - \mu_{1}^{2} - \mu_{2}^{2} = R_{n:-m}(\mathbf{a})^{2}.
\end{equation*}

\section{Generalization to Multi-dimensional Resonance}
\label{sec:MultiD_Resonance}
\subsection{Setup}
Let $\mathbf{a} = (a_{1}, \dots, a_{d})$ be coordinates for $\C_{\times}^{d}$, and generalize the symplectic form~\eqref{eq:Omega-C2} to $\C_{\times}^{d}$ as follows:
\begin{equation}
\label{eq:Omega-C^d}
  \Omega_{\C_{\times}^{d}} \defeq -\frac{\rmi}{2} \sum_{j=1}^{d} \d a_{j} \wedge \d\bar{a}_{j} = -\d\Theta_{\C_{\times}^{d}},
\end{equation}
where
\begin{equation*}
  \Theta_{\C_{\times}^{d}} \defeq \frac{1}{2} \sum_{j=1}^{d} \Im(\bar{a}_{j}\d a_{j}).
\end{equation*}
The associated Poisson bracket is 
\begin{equation}
  \label{eq:PB-C^d}
  \PB{F}{G}_{\C_{\times}^{d}} \defeq 2\rmi \sum_{j=1}^{d} \parentheses{ \pd{F}{a_{j}} \pd{G}{\bar{a}_{j}} - \pd{G}{a_{j}} \pd{F}{\bar{a}_{j}} }.
\end{equation}

We can also generalize the map $f_{n:m}$ introduced in \eqref{eq:f_n:m} earlier as follows:
\begin{proposition}
  \label{prop:f_n}
  Given a multi-index of natural numbers $\mathbf{n} \defeq (n_{1}, \dots, n_{d}) \in \N^{d}$, let us define $\{ \nu_{j} \}_{j \in \{1, \dots, d\}} \subset \N$ by
  \begin{equation*}
    \nu_{j} \defeq \prod_{\substack{1\le i \le d\\i\neq j}} n_{i},
  \end{equation*}
  and consider the map
  \begin{equation*}
    f_{\mathbf{n}}\colon \C_{\times}^{d} \to \C_{\times}^{d};
    \qquad
    \mathbf{a} \mapsto \parentheses{ \frac{a_{1}^{\nu_{1}}}{\sqrt{\nu_{1}}\,|a_{1}|^{\nu_{1}-1}}, \dots, \frac{a_{d}^{\nu_{d}}}{\sqrt{\nu_{d}}\,|a_{d}|^{\nu_{d}-1}} }.
  \end{equation*}
  Then $f_{\mathbf{n}}$ is a Poisson map as well as a local symplectomorphism.
\end{proposition}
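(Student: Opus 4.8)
The plan is to reduce the statement to a single complex variable. The map $f_{\mathbf{n}}$ acts factorwise: writing $g_{\nu}\colon \C_{\times} \to \C_{\times}$, $a \mapsto a^{\nu}/(\sqrt{\nu}\,|a|^{\nu-1})$ and $\omega \defeq -\frac{\rmi}{2}\,\d a \wedge \d\bar{a}$, we have $f_{\mathbf{n}} = g_{\nu_{1}} \times \cdots \times g_{\nu_{d}}$, while $\Omega_{\C_{\times}^{d}}$ of \eqref{eq:Omega-C^d} is the sum over $j$ of the pullbacks of $\omega$ under the coordinate projections $\C_{\times}^{d} \to \C_{\times}$. Hence $f_{\mathbf{n}}^{*}\Omega_{\C_{\times}^{d}} = \Omega_{\C_{\times}^{d}}$ will follow as soon as each one-variable map $g_{\nu}$ satisfies $g_{\nu}^{*}\omega = \omega$; and once we know in addition that $f_{\mathbf{n}}$ is a local diffeomorphism, it is a local symplectomorphism and therefore automatically a Poisson map with respect to the induced brackets \eqref{eq:PB-C^d}, since a form-preserving local diffeomorphism intertwines Hamiltonian vector fields.

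First I would pass to polar coordinates $a = \rho\,e^{\rmi\phi}$ with $\rho > 0$, which is legitimate precisely because $\C_{\times}$ omits the origin, where the chart degenerates. A direct substitution gives $g_{\nu}(\rho\,e^{\rmi\phi}) = (\rho/\sqrt{\nu})\,e^{\rmi\nu\phi}$, so in polar coordinates $(\sigma,\psi)$ on the target $g_{\nu}$ is simply $(\rho,\phi) \mapsto (\sigma,\psi) = (\rho/\sqrt{\nu},\, \nu\phi)$. A one-line computation gives $\omega = -\rho\,\d\rho \wedge \d\phi$, and pulling this back along $g_{\nu}$ yields $-\sigma\,\d\sigma \wedge \d\psi = -(\rho/\nu)\,\d\rho \wedge (\nu\,\d\phi) = -\rho\,\d\rho \wedge \d\phi$, i.e.\ $g_{\nu}^{*}\omega = \omega$. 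Since the Jacobian of $(\rho,\phi) \mapsto (\rho/\sqrt{\nu}, \nu\phi)$ equals $\sqrt{\nu} \neq 0$, $g_{\nu}$ is a local diffeomorphism --- in fact exactly the $\nu$-to-one covering already noted after \eqref{eq:f_n:m}, whose only failure of injectivity is that points differing by angles $2k\pi/\nu$, $k = 1, \dots, \nu-1$, share an image. Thus $g_{\nu}$ is a local symplectomorphism, and taking products so is $f_{\mathbf{n}}$.

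By the reduction above this already gives that $f_{\mathbf{n}}$ is a Poisson map, completing the proof. (If one prefers to avoid polar coordinates, the same can be done by the Wirtinger-derivative bookkeeping used for $f_{n:m}$ in the text: writing each component of $f_{\mathbf{n}}$ as $a^{(\nu+1)/2}\bar{a}^{(1-\nu)/2}/\sqrt{\nu}$, one checks $\PB{g_{\nu}}{\bar{g}_{\nu}}_{\C_{\times}} = 2\rmi$ using the identity $\bigl(\frac{\nu+1}{2}\bigr)^{2} - \bigl(\frac{1-\nu}{2}\bigr)^{2} = \nu$, while $\PB{g_{\nu}}{g_{\nu}}_{\C_{\times}} = 0$ by antisymmetry.) I do not expect a genuine obstacle here: the only points needing care are that $f_{\mathbf{n}}$ is not injective, so the conclusion is necessarily a \emph{local} symplectomorphism rather than a global one, and that the polar-coordinate and fractional-power expressions above are meaningful only because the origin has been deleted from each factor of $\C_{\times}^{d}$.
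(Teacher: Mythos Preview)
Your proof is correct and takes a slightly different route from the paper's. The paper works directly in Wirtinger coordinates: it computes $\partial b_{j}/\partial a_{j}$ and $\partial b_{j}/\partial \bar{a}_{j}$ explicitly and then verifies two identities coordinate by coordinate---one showing that the Poisson bracket \eqref{eq:PB-C^d} is preserved, and another showing that the canonical one-form $\Theta_{\C_{\times}^{d}}$ (hence $\Omega_{\C_{\times}^{d}}$) is preserved. You instead pass to polar coordinates, where each factor $g_{\nu}$ becomes the transparent map $(\rho,\phi)\mapsto(\rho/\sqrt{\nu},\,\nu\phi)$, verify $g_{\nu}^{*}\omega=\omega$ in one line, and then deduce the Poisson property from the local symplectomorphism rather than proving it separately. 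Your approach makes the geometric content (a $\nu$-fold angular cover paired with a radial rescaling chosen precisely to keep the area form invariant) more visible, and it economizes by treating the Poisson statement as a corollary; the paper's approach, on the other hand, gives the slightly stronger fact that the primitive one-form $\Theta_{\C_{\times}^{d}}$ itself is preserved, and your parenthetical Wirtinger alternative is essentially what the paper does.
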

\begin{proof}
  let $\mathbf{b} = (b_{1}, \dots, b_{d})$ be the coordinates for the second copy of $\C_{\times}^{d}$.
  Then the map $f_{\mathbf{n}}$ is written as $\mathbf{b} = f_{\mathbf{n}}(\mathbf{a})$, and one sees that, for any $j \in \{1, \dots, d\}$,
  \begin{equation*}
    \pd{b_{j}}{a_{j}}
    = \frac{\nu_{j} + 1}{2\sqrt{\nu_{j}}} \parentheses{ \frac{a_{j}}{|a_{j}|} }^{\nu_{j} - 1},
    \qquad
    \pd{b_{j}}{\bar{a}_{j}}
    = -\frac{\nu_{j} - 1}{2\sqrt{\nu_{j}}} \parentheses{ \frac{a_{j}}{|a_{j}|} }^{\nu_{j} + 1},
  \end{equation*}
  where the summation on $j$ is \textit{not} assumed.
  This implies that, for any $F, G \in C^{\infty}(\C_{\times}^{d})$,
  \begin{equation*}
    \pd{(F \circ f_{\mathbf{n}})}{a_{j}} \pd{(G \circ f_{\mathbf{n}})}{\bar{a}_{j}}
    - \pd{(G \circ f_{\mathbf{n}})}{a_{j}} \pd{(F \circ f_{\mathbf{n}})}{\bar{a}_{j}}
    = \pd{F}{b_{j}} \pd{G}{\bar{b}_{j}} - \pd{G}{b_{j}} \pd{F}{\bar{b}_{j}}
  \end{equation*}
  as well as
  \begin{equation*}
    \bar{b}_{j}\d b_{j} - b_{j}\d\bar{b}_{j} = \bar{a}_{j}\d a_{j} - a_{j}\d\bar{a}_{j}
    \iff
    \Im(\bar{b}_{j}\d b_{j}) = \Im(\bar{a}_{j}\d a_{j}).
  \end{equation*}
  The former equality implies
  \begin{equation*}
    \PB{(F \circ f_{\mathbf{n}})}{(G \circ f_{\mathbf{n}})}_{\C_{\times}^{d}} = \PB{F}{G}_{\C_{\times}^{d}} \circ f_{\mathbf{n}},
  \end{equation*}
  and hence $f_{\mathbf{n}}$ is Poisson, whereas the latter implies that $f_{\mathbf{n}}$---which is a local diffeomorphism although it is not globally one-to-one---locally leaves $\Theta_{\C_{\times}^{d}}$ invariant and hence $\Omega_{\C_{\times}^{d}}$ as well.
\end{proof}

\subsection{Momentum Maps}
Let us consider the $\mathbb{S}^{1}$-action
\begin{equation}
  \label{eq:Psi^n}
  \Psi^{\mathbf{n}}_{(\cdot)}\colon \mathbb{S}^{1} \times \C_{\times}^{d} \to \C_{\times}^{d};
  \qquad
  (e^{\rmi\theta},\mathbf{a}) \mapsto (e^{\rmi n_{1}\theta}a_{1}, \dots, e^{\rmi n_{d}\theta}a_{d}) \eqdef \Psi^{\mathbf{n}}_{\theta}(\mathbf{a}).
\end{equation}
It is clear that $\Psi^{\mathbf{n}}_{(\cdot)}$ leaves the canonical one-form $\Theta_{\C_{\times}^{d}}$ invariant, i.e., $(\Psi^{\mathbf{n}}_{\theta})^{*}\Theta_{\C_{\times}^{d}} = \Theta_{\C_{\times}^{d}}$ for any $e^{\rmi\theta} \in \mathbb{S}^{1}$, and hence is symplectic with respect to $\Omega_{\C_{\times}^{d}}$.
The corresponding momentum map is
\begin{equation*}
  \frac{1}{2} \sum_{j=1}^{d} n_{j} |a_{j}|^{2} = \mathcal{N}\,R_{\mathbf{n}}(\mathbf{a}),
\end{equation*}
where $\mathcal{N} \defeq \prod_{j=1}^{d} n_{j}$ and we defined $R_{\mathbf{n}}\colon \C_{\times}^{d} \to \R$ as
\begin{equation}
  \label{eq:R_n}
  R_{\mathbf{n}}(\mathbf{a}) \defeq \frac{1}{2} \sum_{j=1}^{d} \frac{|a_{j}|^{2}}{\nu_{j}}.
\end{equation}
Clearly we have $R_{\mathbf{n}} = R_{\mathbf{1}} \circ f_{\mathbf{n}}$.

Let us also consider a natural $\SU(d)$-action on $\C_{\times}^{d}$, i.e.,
\begin{equation}
  \label{eq:SU(d)-action}
  \Phi_{(\cdot)}\colon \SU(d) \times \C_{\times}^{d} \to \C_{\times}^{d},
  \qquad
  (U,\mathbf{b}) \mapsto U\mathbf{b}.
\end{equation}
and find an expression for the corresponding momentum map for the special case $\mathbf{n} = \mathbf{1} \defeq (1, \dots, 1) \in \N^{d}$:
\begin{lemma}
  \label{lem:J_1}
  The momentum map $\mathbf{J}_{\mathbf{1}}\colon \C_{\times}^{d} \to \su(d)^{*}$ corresponding to the above $\SU(d)$-action~\eqref{eq:SU(d)-action} is given by
  \begin{equation*}
    \mathbf{J}_{\mathbf{1}}(\mathbf{b}) = \rmi\parentheses{ \mathbf{b}\mathbf{b}^{*} - \frac{1}{d}|\mathbf{b}|^{2} I }.
  \end{equation*}
  It is a Poisson map with respect to $\Omega_{\C_{\times}^{d}}$ and the $(+)$-Lie--Poisson bracket on $\su(d)^{*}$.
\end{lemma}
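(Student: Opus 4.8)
The plan is to read $\mathbf{J}_{\mathbf{1}}$ off the standard formula for the momentum map of an action that preserves a symplectic potential, and then to deduce the Poisson property from equivariance, exactly as was done for $\mathbf{J}_{1:1}$ in the $d=2$ case.

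First I would observe that the $\SU(d)$-action~\eqref{eq:SU(d)-action} leaves $\Theta_{\C_\times^d}$ invariant: since $\sum_{j}\bar b_j\,\d b_j = \mathbf{b}^{*}\d\mathbf{b}$, its pullback by $\mathbf{b}\mapsto U\mathbf{b}$ is $\mathbf{b}^{*}U^{*}U\,\d\mathbf{b} = \mathbf{b}^{*}\d\mathbf{b}$ because $U^{*}U = I$, so $(\Phi_U)^{*}\Theta_{\C_\times^d} = \Theta_{\C_\times^d}$ and $\Phi$ is symplectic with respect to $\Omega_{\C_\times^d} = -\d\Theta_{\C_\times^d}$. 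Hence $\mathcal{L}_{\xi_{\C_\times^d}}\Theta_{\C_\times^d} = 0$, and Cartan's formula together with the convention $\ins{X_H}\Omega_{\C_\times^d} = \d H$ shows that $\ip{\mathbf{J}_{\mathbf{1}}(\mathbf b)}{\xi} \defeq \ins{\xi_{\C_\times^d}}\Theta_{\C_\times^d}$ defines a momentum map. Since the action is linear, $\xi_{\C_\times^d}(\mathbf b) = \xi\mathbf b$, so $\ins{\xi_{\C_\times^d}}\Theta_{\C_\times^d} = \frac{1}{2}\sum_j \Im\parentheses{\bar b_j (\xi\mathbf b)_j} = \frac{1}{2}\Im(\mathbf{b}^{*}\xi\mathbf{b})$; because $\xi$ is skew-Hermitian, $\mathbf{b}^{*}\xi\mathbf{b}$ is purely imaginary, and this equals $-\frac{\rmi}{2}\mathbf{b}^{*}\xi\mathbf{b} = -\frac{\rmi}{2}\tr(\xi\mathbf{b}\mathbf{b}^{*})$. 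Matching this against the pairing $\ip{\zeta}{\xi} = \frac{1}{2}\tr(\zeta^{*}\xi)$ and using $\tr\xi = 0$ to discard any multiple of $I$, one finds that the unique $\zeta\in\su(d)$ with $\frac{1}{2}\tr(\zeta^{*}\xi) = -\frac{\rmi}{2}\tr(\xi\mathbf{b}\mathbf{b}^{*})$ for all $\xi\in\su(d)$ is $\zeta = \rmi\parentheses{\mathbf{b}\mathbf{b}^{*} - \frac{1}{d}|\mathbf{b}|^{2}I}$, the $\frac{1}{d}|\mathbf{b}|^{2}I$ term being there solely to make $\mathbf{J}_{\mathbf{1}}(\mathbf b)$ land in the traceless part $\su(d)\cong\su(d)^{*}$.

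Finally, for the Poisson statement I would verify equivariance directly: $\mathbf{J}_{\mathbf{1}}(U\mathbf{b}) = \rmi\parentheses{U\mathbf{b}\mathbf{b}^{*}U^{*} - \frac{1}{d}|\mathbf{b}|^{2}I} = U\,\mathbf{J}_{\mathbf{1}}(\mathbf{b})\,U^{-1} = \Ad_U\mathbf{J}_{\mathbf{1}}(\mathbf b)$, which under the $\Ad$-invariant identification $\su(d)^{*}\cong\su(d)$ is precisely $\Ad^{*}_{U^{-1}}\mathbf{J}_{\mathbf{1}}(\mathbf b)$. An equivariant momentum map for a left symplectic action is a Poisson map onto $\su(d)^{*}_{+}$ (Marsden and Ratiu~\cite[Theorem~12.4.1]{MaRa1999}), just as invoked earlier for $\mathbf{J}_{1:1}$, which completes the proof. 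I expect no genuine obstacle; the only thing requiring care is the bookkeeping of signs and normalizations --- the sign in $\ip{\mathbf{J}_{\mathbf{1}}}{\xi} = \ins{\xi_{\C_\times^d}}\Theta_{\C_\times^d}$, the factor picked up from $\Im$ of a purely imaginary scalar, and the $\frac{1}{2}$ in $\ip{\cdot}{\cdot}$ --- which I would fix once and for all by checking consistency with the $\mathbb{S}^{1}$ momentum map $\frac{1}{2}\sum_j n_j|a_j|^{2}$ recorded just above the lemma.
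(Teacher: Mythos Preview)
Your proof is correct and follows essentially the same route as the paper: both contract $\Theta_{\C_\times^d}$ with the infinitesimal generator to get $\langle\mathbf{J}_{\mathbf{1}}(\mathbf{b}),\xi\rangle = \tfrac{1}{2}\Im(\mathbf{b}^{*}\xi\mathbf{b}) = -\tfrac{\rmi}{2}\tr(\xi\,\mathbf{b}\mathbf{b}^{*})$ and then identify the result via the trace inner product. The only cosmetic difference is that the paper first computes the $\U(d)$ momentum map $\tilde{\mathbf{J}}(\mathbf{b}) = \rmi\,\mathbf{b}\mathbf{b}^{*}$ and then applies the projection $\iota^{*}(\mu) = \mu - \tfrac{1}{d}\tr(\mu)I$ to land in $\su(d)^{*}$, whereas you work directly in $\su(d)$ and pin down the $-\tfrac{1}{d}|\mathbf{b}|^{2}I$ correction from $\tr\xi = 0$; you also spell out the equivariance check for the Poisson claim, which the paper defers to the proof of Theorem~\ref{thm:dual_pair-n}.
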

\begin{proof}
  Let us first find the momentum map $\tilde{\mathbf{J}}\colon \C_{\times}^{d} \to \u(d)^{*}$ corresponding to the $\U(d)$-action defined the same manner as \eqref{eq:SU(d)-action}.
  Let $\xi \in \u(d)$ be arbitrary.
  Then the corresponding infinitesimal generator is given by $\xi_{\C_{\times}^{d}}(\mathbf{b}) = \xi\mathbf{b}$.
  Since this action clearly leaves $\Theta_{\C_{\times}^{d}}$ invariant, the momentum map $\tilde{\mathbf{J}}$ is defined by
  \begin{equation*}
    \ip{ \tilde{\mathbf{J}}(\mathbf{b}) }{\xi}
    = \ip{ \Theta_{\C_{\times}^{d}}(\mathbf{b}) }{ \xi\mathbf{b} },
  \end{equation*}
  where we define an inner product on $\u(d)$ as follows:
  \begin{equation*}
    \ip{\eta}{\xi} \defeq \frac{1}{2}\tr(\eta^{*}\xi).
  \end{equation*}
  We may then identify $\u(d)^{*}$ with $\u(d)$ and $\su(d)^{*}$ with $\su(d)$ via the above inner product.
  Now,
  \begin{align*}
    \ip{\Theta_{\C_{\times}^{d}}(\mathbf{b})}{\xi\mathbf{b}}
    &= \frac{1}{2}\Im(\mathbf{b}^{*}\xi\mathbf{b}) \\
    &= -\frac{\rmi}{2} \mathbf{b}^{*}\xi\mathbf{b} \\
    &= -\frac{\rmi}{2} \tr(\mathbf{b}\mathbf{b}^{*}\xi) \\
    &= \frac{1}{2} \tr\parentheses{ (\rmi\mathbf{b}\mathbf{b}^{*})^{*}\xi } \\
    &= \ip{ \rmi\mathbf{b}\mathbf{b}^{*} }{\xi},
  \end{align*}
  where we used the fact that $\xi^{*} = -\xi$ and hence $\mathbf{b}^{*}\xi\mathbf{b}$ is a pure imaginary number.
  So we have $\tilde{\mathbf{J}}(\mathbf{b}) = \rmi\,\mathbf{b}\mathbf{b}^{*}$.

  Now note that the action $\Phi$ in \eqref{eq:SU(d)-action} is the induced subgroup action of of the above $\U(d)$-action.
  Let $\iota\colon \su(d) \to \u(d)$ be the inclusion and $\iota^{*}\colon \u(d)^{*} \to \su(d)^{*}$ be its dual.
  Then the momentum map $\mathbf{J}_{\mathbf{1}}$ is given by $\mathbf{J}_{\mathbf{1}} = \iota^{*} \circ \tilde{\mathbf{J}}$; see, e.g., Marsden and Ratiu~\cite[Exercise~11.4.2]{MaRa1999}.

  By definition, the dual map $\iota^{*}\colon \u(d)^{*} \to \su(d)^{*}$ satisfies
  \begin{equation*}
    \ip{\iota^{*}(\mu)}{\xi}
    = \ip{\mu}{\iota(\xi)}
    = \ip{\mu|_{\su(d)}}{\xi},
  \end{equation*}
  and hence $\iota^{*}(\mu) = \mu|_{\su(d)}$.
  It is easy to see that the orthogonal complement of $\su(d)$ in $\u(d)$ in terms of the above inner product is given by
  \begin{equation*}
    \su(d)^{\perp} = \Span\braces{ \rmi\sqrt{\frac{2}{d}}\,I }.
  \end{equation*}
  Therefore, using the identification $\u(d)^{*} \cong \u(d)$ and $\su(d)^{*} \cong \su(d)$, the dual map $\iota^{*}$ is given by the orthogonal projection onto $\su(d)$:
  \begin{align*}
    \iota^{*}(\mu) &= \mu|_{\su(d)} \\
                   &= \mu - \ip{\rmi\sqrt{\frac{2}{d}}\,I}{\mu}\rmi\sqrt{\frac{2}{d}}\,I \\
                   &= \mu - \frac{1}{d}\tr(\mu)I.
  \end{align*}
  Therefore, we obtain
  \begin{equation*}
    \mathbf{J}_{\mathbf{1}}(\mathbf{b})
    = \iota^{*} \circ \tilde{\mathbf{J}}(\mathbf{b})
    = \rmi\parentheses{ \mathbf{b}\mathbf{b}^{*} - \frac{1}{d}|\mathbf{b}|^{2} I }. \qedhere
  \end{equation*}
\end{proof}

\subsection{Dual Pairs}
Now we are ready to generalize Theorem~\ref{thm:dual_pair-n:m} to the above multi-dimensional setting.
Let $\su(d)^{*}_{+}$ denote $\su(d)^{*}$ equipped with the $(+)$-Lie--Poisson bracket on $\su(d)^{*}$, and define $\mathbf{J}_{\mathbf{n}}\colon \C_{\times}^{d} \to \su(d)^{*}_{+}$ as $\mathbf{J}_{\mathbf{n}} \defeq \mathbf{J}_{\mathbf{1}} \circ f_{\mathbf{n}}$.
Then we have the following generalization:
\begin{theorem}
  \label{thm:dual_pair-n}
  The Poisson maps $R_{\mathbf{n}}\colon \C_{\times}^{d} \to \R$ and $\mathbf{J}_{\mathbf{n}}\colon \C_{\times}^{d} \to \mathbf{J}_{\mathbf{1}}(\C_{\times}^{d}) \subset \su(d)^{*}_{+}$ are a dual pair for any multi-index $\mathbf{n} \in \N^{d}$ of $d$ natural numbers, i.e., for any $\mathbf{a} \in \C_{\times}^{d}$, $\ker T_{\mathbf{a}}R_{\mathbf{n}}$ and $\ker T_{\mathbf{a}}\mathbf{J}_{\mathbf{n}}$ are symplectic orthogonal complements to each other.
  Moreover, the dual pair of Poisson maps for the $\mathbf{n}$ resonances is related to the dual pair of momentum maps $R_{\mathbf{1}}$ and $\mathbf{J}_{\mathbf{1}}$ for the $\mathbf{1}$-resonance as is shown in the diagram below.
  \begin{equation*}
    \begin{tikzcd}[column sep=9ex, row sep=9ex]
      & (\C_{\times}^{d}, \Omega_{\C_{\times}^{d}}) \arrow{d}{f_{\mathbf{n}}} \arrow[swap]{dl}{R_{\mathbf{n}}} \arrow{dr}{\mathbf{J}_{\mathbf{n}}}& \\
      \R & (\C_{\times}^{d}, \Omega_{\C_{\times}^{d}}) \arrow{l}{R_{\mathbf{1}}} \arrow[swap]{r}{\mathbf{J}_{\mathbf{1}}} & \mathbf{J}_{\mathbf{1}}(\C_{\times}^{d}) \subset \su(d)^{*}_{+}
    \end{tikzcd}
  \end{equation*}
\end{theorem}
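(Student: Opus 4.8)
The plan is to mirror the proof of Theorem~\ref{thm:dual_pair-n:m}. The argument splits into two parts: first establish that the bottom row $R_{\mathbf{1}}$, $\mathbf{J}_{\mathbf{1}}$ is a dual pair on $(\C_{\times}^{d}, \Omega_{\C_{\times}^{d}})$, and then transfer this to the top row by pulling back along $f_{\mathbf{n}}$. The second part is essentially verbatim from the proof of Theorem~\ref{thm:dual_pair-n:m}: since $R_{\mathbf{n}} = R_{\mathbf{1}} \circ f_{\mathbf{n}}$ and $\mathbf{J}_{\mathbf{n}} = \mathbf{J}_{\mathbf{1}} \circ f_{\mathbf{n}}$, and $f_{\mathbf{n}}$ is a local diffeomorphism (Proposition~\ref{prop:f_n}), for every $\mathbf{a} \in \C_{\times}^{d}$ one has $\ker T_{\mathbf{a}}R_{\mathbf{n}} = (T_{\mathbf{a}}f_{\mathbf{n}})^{-1}(\ker T_{f_{\mathbf{n}}(\mathbf{a})}R_{\mathbf{1}})$ and $\ker T_{\mathbf{a}}\mathbf{J}_{\mathbf{n}} = (T_{\mathbf{a}}f_{\mathbf{n}})^{-1}(\ker T_{f_{\mathbf{n}}(\mathbf{a})}\mathbf{J}_{\mathbf{1}})$; because $f_{\mathbf{n}}$ is a local symplectomorphism for $\Omega_{\C_{\times}^{d}}$, the isomorphism $T_{\mathbf{a}}f_{\mathbf{n}}$ carries symplectic orthogonal complements to symplectic orthogonal complements, so $(\ker T_{\mathbf{a}}R_{\mathbf{n}})^{\Omega} = \ker T_{\mathbf{a}}\mathbf{J}_{\mathbf{n}}$ follows from the corresponding identity at $f_{\mathbf{n}}(\mathbf{a})$, and the commutativity of the diagram is immediate from the definitions. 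Thus the only genuinely new ingredient --- for $d = 2$ it was quoted from \cite{HoVi2012,GoStMa1987} --- is the dual pair property of $(R_{\mathbf{1}}, \mathbf{J}_{\mathbf{1}})$ for arbitrary $d$.

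For that I would use the standard identity $\ker T_{\mathbf{b}}\mathbf{J} = (\mathfrak{g} \cdot \mathbf{b})^{\Omega}$, valid for any momentum map $\mathbf{J}$ of a $G$-action, where $\mathfrak{g} \cdot \mathbf{b} \defeq \setdef{ \xi_{\C_{\times}^{d}}(\mathbf{b}) }{ \xi \in \mathfrak{g} }$ is the tangent space to the orbit; it follows at once from $\d\ip{\mathbf{J}}{\xi} = \ins{\xi_{\C_{\times}^{d}}}\Omega_{\C_{\times}^{d}}$. Applied to the $\mathbb{S}^{1}$-action $\Psi^{\mathbf{1}}$ --- whose momentum map is $\mathcal{N} R_{\mathbf{1}}$ (here $\mathcal{N} = 1$) and whose orbit tangent at $\mathbf{b}$ is $\R\,\rmi\mathbf{b}$ --- this gives $\ker T_{\mathbf{b}}R_{\mathbf{1}} = (\R\,\rmi\mathbf{b})^{\Omega}$; applied to the $\SU(d)$-action~\eqref{eq:SU(d)-action} it gives $\ker T_{\mathbf{b}}\mathbf{J}_{\mathbf{1}} = (\su(d) \cdot \mathbf{b})^{\Omega}$. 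Since $\Omega_{\C_{\times}^{d}}$ is nondegenerate, taking the $\Omega$-orthogonal of the first equality shows that the dual pair condition $(\ker T_{\mathbf{b}}R_{\mathbf{1}})^{\Omega} = \ker T_{\mathbf{b}}\mathbf{J}_{\mathbf{1}}$ is equivalent to the linear-algebraic statement
\begin{equation*}
  \su(d) \cdot \mathbf{b} = (\R\,\rmi\mathbf{b})^{\Omega} \qquad \text{for all } \mathbf{b} \in \C_{\times}^{d}.
\end{equation*}
A one-line computation with $\Omega_{\C_{\times}^{d}}$ (or with $\Theta_{\C_{\times}^{d}}$) identifies the right-hand side as the real hyperplane $\setdef{ v \in \C^{d} }{ \Re\ip{v}{\mathbf{b}} = 0 }$, where $\ip{\cdot}{\cdot}$ denotes the Hermitian inner product on $\C^{d}$.

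It then remains to show $\su(d) \cdot \mathbf{b} = \setdef{ v \in \C^{d} }{ \Re\ip{v}{\mathbf{b}} = 0 }$ for $\mathbf{b} \neq 0$. One inclusion is easy: for $\xi \in \su(d)$, skew-Hermiticity gives $\Re\ip{\xi\mathbf{b}}{\mathbf{b}} = \frac{1}{2}\mathbf{b}^{*}(\xi + \xi^{*})\mathbf{b} = 0$, so $\su(d) \cdot \mathbf{b} \subseteq \u(d) \cdot \mathbf{b} \subseteq \setdef{ v }{ \Re\ip{v}{\mathbf{b}} = 0 }$. For the reverse inclusion I would count dimensions: the stabilizer of $\mathbf{b}$ in $\SU(d)$ consists of the determinant-one unitary automorphisms of the Hermitian orthogonal complement $\mathbf{b}^{\perp}$, hence is conjugate to $\SU(d-1)$, so $\dim \su(d) \cdot \mathbf{b} = (d^{2} - 1) - ((d-1)^{2} - 1) = 2d - 1 = \dim\setdef{ v }{ \Re\ip{v}{\mathbf{b}} = 0 }$, and since one subspace is contained in the other, equality follows. (Equivalently, one may write the general hyperplane element as $\rmi s \mathbf{b} + w$ with $s \in \R$, $w \perp_{\C} \mathbf{b}$, and exhibit $\xi \in \su(d)$ with $\xi\mathbf{b} = \rmi s\mathbf{b} + w$ explicitly, taking $\abs{\mathbf{b}}^{-2}(w\mathbf{b}^{*} - \mathbf{b}w^{*}) \in \su(d)$ for the $w$-part and a traceless diagonal matrix with $\mathbf{b}$ as an eigenvector for the $\rmi s\mathbf{b}$-part; what this records is that $\SU(d)$ acts transitively on spheres in $\C^{d}$.) This is where the hypothesis $d \geq 2$ enters --- for $d = 1$ the construction and the dimension count both collapse.

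The main obstacle is precisely this last inclusion, namely checking that imposing tracelessness does not shrink the orbit $\su(d)\cdot\mathbf{b}$ below the hyperplane $\setdef{ v }{ \Re\ip{v}{\mathbf{b}} = 0 }$; the dimension count disposes of it once the stabilizer is identified with $\SU(d-1)$, i.e.\ once $\su(d)\cdot\mathbf{b} = \u(d)\cdot\mathbf{b}$ is known. Everything else --- the momentum-map kernel identity, the computation of $(\R\,\rmi\mathbf{b})^{\Omega}$, and the transfer along $f_{\mathbf{n}}$ --- is routine and runs parallel to the $d = 2$ case already established in Theorem~\ref{thm:dual_pair-n:m}.
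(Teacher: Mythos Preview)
Your proposal is correct, and the transfer step along $f_{\mathbf{n}}$ is exactly what the paper does. The difference lies in how the base case $\mathbf{n}=\mathbf{1}$ is handled.

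The paper does not compute symplectic orthogonals directly. Instead it verifies the hypotheses of a general dual-pair criterion (Theorem~2.1 of \cite{HoVi2012}): it notes that $R_{\mathbf{1}}$ is $\SU(d)$-invariant and $\mathbf{J}_{\mathbf{1}}$ is equivariant (hence both are Poisson), observes that the level sets $R_{\mathbf{1}}^{-1}(r)$ are $(2d-1)$-spheres on which $\SU(d)$ acts transitively, checks that every point of $\C_{\times}^{d}$ is regular for both maps (with the codomain of $\mathbf{J}_{\mathbf{1}}$ restricted to its image), and then invokes the cited criterion to conclude. Your route, by contrast, unpacks the dual-pair condition via the identity $\ker T_{\mathbf{b}}\mathbf{J}=(\mathfrak{g}\cdot\mathbf{b})^{\Omega}$ and reduces it to the linear statement $\su(d)\cdot\mathbf{b}=\{v:\Re\ip{v}{\mathbf{b}}=0\}$, which you then verify by inclusion plus a stabilizer dimension count. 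The two arguments are close cousins---your identification of the stabilizer with $\SU(d-1)$ is precisely the infinitesimal form of the paper's transitivity observation, and you even flag this parenthetically---but yours is self-contained and does not require the external black box, at the cost of a few more lines of linear algebra. Either approach is perfectly adequate here.
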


\begin{proof}
  First consider the special case with $\mathbf{n} = \mathbf{1}$.
  We note in passing that this case is also treated in Cari{\~n}ena et~al.~\cite[Section~5.4.5.3]{CaIbMaMo2014}.
  It is clear from \eqref{eq:R_n} that $R_{\mathbf{1}}$ is $\mathbb{S}^{1}$ invariant as well as $\SU(d)$ invariant, whereas $\mathbf{J}_{\mathbf{1}}$ is equivariant: From Lemma~\ref{lem:J_1}, for any $U \in \SU(d)$, we have
  \begin{equation*}
    \mathbf{J}_{\mathbf{1}}( \Phi_{U}(\mathbf{b}) )
    = \Ad_{U^{-1}}^{*}\mathbf{J}_{\mathbf{1}}(\mathbf{b}).
  \end{equation*}
  Therefore, both $R_{\mathbf{1}}$ and $\mathbf{J}_{\mathbf{1}}$ are Poisson maps; particularly the latter is Poisson with respect to the canonical Poisson bracket~\eqref{eq:PB-C^d} on $\C_{\times}^{d}$ and the $(+)$-Lie--Poisson bracket on $\su(d)^{*}$.
  
  One also sees that $\SU(d)$ acts on the level sets of $R_{\mathbf{1}}$ transitively via the above action $\Phi$ as follows:
  The level set $R_{\mathbf{1}}^{-1}(r)$ of $R_{\mathbf{1}}$ with any $r > 0$ is a $(2d-1)$-dimensional sphere in $\C_{\times}^{d}$ (those points corresponding to the removed origins of the copies of $\C_{\times}$ are removed) centered at the (removed) origin, and thus $\SU(d)$ acts on each level set transitively.
  It is also clear that every point in $\C_{\times}^{d}$ is a regular point of $R_{\mathbf{1}}$ and $\mathbf{J}_{\mathbf{1}}$; notice that the codomain of $\mathbf{J}_{\mathbf{1}}$ is restricted to the image $\mathbf{J}_{\mathbf{1}}(\C_{\times}^{d})$ in $\su(d)^{*}_{+}$.
  Therefore, by Theorem~2.1 of \cite{HoVi2012}, $R_{\mathbf{1}}$ and $\mathbf{J}_{\mathbf{1}}$ constitute a dual pair.
  
  The extension to an arbitrary $\mathbf{n} \in \N^{d}$ is a simple generalization of the proof of Theorem~\ref{thm:dual_pair-n:m} using Proposition~\ref{prop:f_n} as the above diagram shows: Note that we have $R_{\mathbf{n}} = R_{\mathbf{1}} \circ f_{\mathbf{n}}$ and $\mathbf{J}_{\mathbf{n}} = \mathbf{J}_{\mathbf{1}} \circ f_{\mathbf{n}}$ here.
\end{proof}

\begin{example}[1:1:2 resonance]
  \label{ex:112}
  Let $d = 3$ and consider the dynamics in $\C_{\times}^{3}$ with respect to the symplectic structure~\eqref{eq:Omega-C^d} and the Hamiltonian
  \begin{equation*}
    H(\mathbf{a}) = \Re\parentheses{ a_{1}^{2}(\bar{a}_{2}^{2} + \bar{a}_{3}) }.
  \end{equation*}
  The Hamiltonian system $\ins{X_{H}}\Omega_{\C_{\times}^{3}} = \d{H}$ yields
  \begin{equation*}
    \dot{a}_{1} = 2\rmi\,\bar{a}_{1} (a_{2}^{2} + a_{3}),
    \qquad
    \dot{a}_{2} = 2\rmi\,a_{1}^{2} \bar{a}_{2},
    \qquad
    \dot{a}_{3} = \rmi\,a_{1}^{2}.
  \end{equation*}
  The Hamiltonian $H$ has $1:1:2$ resonant symmetry, i.e., $H \circ \Psi^{\mathbf{n}}_{\theta} = H$ with $\mathbf{n} = (1,1,2)$ for any $e^{\rmi\theta} \in \mathbb{S}^{1}$ (see \eqref{eq:Psi^n} for the definition of the action $\Psi$), and thus 
  \begin{equation*}
    R_{\mathbf{n}}(\mathbf{a}) = \frac{1}{4}\parentheses{ |a_{1}|^{2} + |a_{2}|^{2} + 2|a_{3}|^{2} }
  \end{equation*}
  is a conserved quantity for the dynamics.

  Let us use a variant $\{ \gamma_{j} \}_{j=1}^{8} \subset \su(3)$ of the Gell-Mann matrices~\cite{Ge1962} as a basis for $\su(3)$ to identify $\su(3)$ with $\R^{8}$:
  For any $\xi \in \su(3)$,
  \begin{equation}
    \label{eq:su3-basis}
    \xi = \sum_{j=1}^{8} \xi_{j} \gamma_{j}
    = \rmi
    \begin{bmatrix}
      \xi_{3}+{\xi_{8}}/{\sqrt{3}} & \xi_{1}+\rmi \xi_{2} & \xi_{4}+\rmi \xi_{5} \smallskip\\
      \xi_{1}-\rmi \xi_{2} & {\xi_{8}}/{\sqrt{3}}-\xi_{3} & \xi_{6}+\rmi \xi_{7} \smallskip\\
      \xi_{4}-\rmi \xi_{5} & \xi_{6}-\rmi \xi_{7} & -{2 \xi_{8}}/{\sqrt{3}}
    \end{bmatrix}
    \mapsto \boldsymbol{\xi} = (\xi_{1}, \dots, \xi_{8}) \in \R^{8}.
  \end{equation}
  We also identify $\su(3)^{*}$ with $\su(3)$ as well just as described in the proof of Lemma~\ref{lem:J_1}.
  
  The map $f_{\mathbf{n}}\colon \C_{\times}^{3} \to \C_{\times}^{3}$ is defined as
  \begin{equation*}
    f_{\mathbf{n}}(\mathbf{a}) \defeq \parentheses{ \frac{a_{1}^{2}}{\sqrt{2}\,|a_{1}|},\, \frac{a_{2}^{2}}{\sqrt{2}\,|a_{2}|},\, a_{3} },
  \end{equation*}
  and $\mathbf{J}_{\mathbf{n}}\colon \C_{\times}^{3} \to \su(3)^{*}$ takes the form
  \begin{multline*}
    \mathbf{J}_{\mathbf{n}}(\mathbf{a}) = \Biggl(
    \Re\parentheses{ \frac{a_{1}^{2} \bar{a}_{2}^{2}}{2 |a_{1}| |a_{2}|} },\,
    \Im\parentheses{ \frac{a_{1}^{2} \bar{a}_{2}^{2}}{2 |a_{1}| |a_{2}|} },\,
    \frac{1}{4}\parentheses{ |a_{1}|^{2} - |a_{2}|^{2} },\, \\
    \Re\parentheses{ \frac{a_{1}^{2} \bar{a}_{3}}{\sqrt{2} |a_{1}|} },\,
    \Im\parentheses{ \frac{a_{1}^{2} \bar{a}_{3}}{\sqrt{2} |a_{1}|} },\,
    \Re\parentheses{ \frac{a_{2}^{2} \bar{a}_{3}}{\sqrt{2} |a_{2}|} },\,
    \Im\parentheses{ \frac{a_{2}^{2} \bar{a}_{3}}{\sqrt{2} |a_{2}|} },\, \\
    \frac{1}{4\sqrt{3}}\parentheses{ |a_{1}|^{2} + |a_{2}|^{2} - 4 |a_{3}|^{2} }
    \Biggr).
  \end{multline*}

  We define the reduced Hamiltonian
  \begin{equation*}
    h(\mu) \defeq 4 \mu_{1} \sqrt{\mu_{1}^2+\mu_{2}^2} + 2 \mu_{4} \parentheses{ \frac{(\mu_{1}^{2} + \mu_{2}^{2})(\mu_{4}^{2} + \mu_{5}^{2})}{\mu_{6}^{2} + \mu_{7}^{2}} }^{1/4}
  \end{equation*}
  on the open subset
  \begin{equation*}
    \setdef{ \mu \in \su(3)^{*} }{ (\mu_{1},\mu_{2}) \neq 0,\, (\mu_{4},\mu_{5}) \neq 0,\, (\mu_{6},\mu_{7}) \neq 0 }
  \end{equation*}
  so that it satisfies $h \circ \mathbf{J}_{\mathbf{n}} = H$.
  The reduced dynamics is then given by the Lie--Poisson equation
  \begin{equation*}
    \dot{\mu} = -\ad_{\tpd{h}{\mu}}^{*}\mu.
  \end{equation*}

  One advantage of our formulation is that one can find the Casimirs relatively easily because the Lie--Poisson bracket is standard.
  In fact, it is well known that $\su(3)^{*}$ has quadratic and cubic Casimirs:
  \begin{equation*}
    C_{2}(\mu) \defeq \sum_{j=1}^{8}\mu_{j}^{2},
    \qquad
    C_{3}(\mu) \defeq \sum_{1\le j,k,l\le 8} d_{jkl}\mu_{j} \mu_{k} \mu_{l},
  \end{equation*}
  where the coefficients $\{ d_{jkl} \}_{1\le j,k,l\le 8}$ are defined so that the basis $\{ \gamma_{j} \}_{j=1}^{8}$ for $\su(3)$ defined in \eqref{eq:su3-basis} satisfies, for any $j, k \in \{1, \dots, 8\}$,
  \begin{equation*}
    \gamma_{j}\gamma_{k} + \gamma_{k}\gamma_{j} = -\frac{4}{3}\delta_{jk}I + 2\rmi \sum_{l=1}^{8} d_{jkl} \gamma_{l}.
  \end{equation*}
  This results in the following non-zero coefficients (all the others vanish):
  \begin{gather*}
    d_{118} = d_{228} = d_{338} = -d_{888} = \frac{1}{\sqrt{3}}, \\
    d_{146} = d_{157} = -d_{247} = d_{256} = d_{344} = d_{355} = -d_{366} = -d_{377} = \frac{1}{2}, \\
    d_{448} = d_{558} = d_{668} = d_{778} = -\frac{1}{2\sqrt{3}}.
  \end{gather*}
  These two Casimirs are conserved along the Lie--Poisson dynamics.
\end{example}

While the geometry of the multi-dimensional generalization of the dual pairs works out nicely, it is not clear if this dual pair is particularly effective in understanding multi-dimensional dynamics in resonance.
In fact, in the above example, the resulting Lie--Poisson equation is defined in a higher-dimensional space, $\su(3)^{*} \cong \R^{8}$, than the original one, $\C_{\times}^{3}$.
The extra conserved quantities $C_{2}$ and $C_{3}$ compensate for this increase in dimension, but unfortunately it is not evident whether the Lie--Poisson formulation has a clear advantage over the original formulation.

\section*{Acknowledgments}
I am grateful to the referees for their comments and criticisms, particularly for pointing out some subtleties of the results in Theorem~\ref{thm:dual_pair-n} and Example~\ref{ex:112}.
I would also like to thank Darryl Holm for his comments and suggestions on an earlier draft of the paper.
Happy 70th birthday, Darryl!

\medskip
Received xxxx 20xx; revised xxxx 20xx.
\medskip

\begin{thebibliography}{16}

\bibitem[Cari{\~n}ena et~al.(2014)Cari{\~n}ena, Ibort, Marmo, and
  Morandi]{CaIbMaMo2014}
J.F. Cari{\~n}ena, A.~Ibort, G.~Marmo, and G.~Morandi.
\newblock \emph{Geometry from Dynamics, Classical and Quantum}.
\newblock Springer, 2014.

\bibitem[Churchill et~al.(1983)Churchill, Kummer, and Rod]{ChKuRo1983}
R.~C. Churchill, M.~Kummer, and D.~L. Rod.
\newblock On averaging, reduction, and symmetry in {H}amiltonian systems.
\newblock \emph{Journal of Differential Equations}, 49\penalty0 (3):\penalty0
  359--414, 1983.

\bibitem[Dullin et~al.(2004)Dullin, Giacobbe, and Cushman]{DuGiCu2004}
H.~Dullin, A.~Giacobbe, and R.~Cushman.
\newblock Monodromy in the resonant swing spring.
\newblock \emph{Physica D: Nonlinear Phenomena}, 190\penalty0 (1):\penalty0
  15--37, 2004.

\bibitem[Gell-Mann(1962)]{Ge1962}
M.~Gell-Mann.
\newblock Symmetries of baryons and mesons.
\newblock \emph{Physical Review}, 125\penalty0 (3):\penalty0 1067--1084, 1962.

\bibitem[Golubitsky et~al.(1987)Golubitsky, Stewart, and Marsden]{GoStMa1987}
M.~Golubitsky, I.~Stewart, and J.~E. Marsden.
\newblock Generic bifurcation of {H}amiltonian systems with symmetry.
\newblock \emph{Physica D: Nonlinear Phenomena}, 24\penalty0 (1--3):\penalty0
  391--405, 1987.

\bibitem[Haller(1999)]{Haller1999}
G.~Haller.
\newblock \emph{Chaos Near Resonance}.
\newblock Applied Mathematical Sciences. Springer, New York, 1999.

\bibitem[Holm(2011)]{Ho2011a}
D.~D. Holm.
\newblock \emph{Geometric Mechanics, Part I: Dynamics and Symmetry}.
\newblock Imperial College Press, 2nd edition, 2011.

\bibitem[Holm and Vizman(2012)]{HoVi2012}
D.~D. Holm and C.~Vizman.
\newblock Dual pairs in resonances.
\newblock \emph{Journal of Geometric Mechanics}, 4\penalty0 (3):\penalty0
  297--311, 2012.

\bibitem[Iwai(1985)]{Iw1985}
T.~Iwai.
\newblock On reduction of two degrees of freedom {H}amiltonian systems by an
  {$S^1$} action, and {$SO_0(1,2)$} as a dynamical group.
\newblock \emph{Journal of Mathematical Physics}, 26\penalty0 (5):\penalty0
  885--893, 1985.

\bibitem[Kirillov(2004)]{Ki2004}
A.~A. Kirillov.
\newblock \emph{Lectures on the Orbit Method}.
\newblock Graduate Studies in Mathematics. American Mathematical Society, 2004.

\bibitem[Kummer(1981)]{Ku1981}
M.~Kummer.
\newblock On the construction of the reduced phase space of a {H}amiltonian
  system with symmetry.
\newblock \emph{Indiana Univ. Math. J.}, 30:\penalty0 281--291, 1981.

\bibitem[Kummer(1986)]{Ku1986}
M.~Kummer.
\newblock \emph{Lecture 1: On resonant hamiltonian systems with finitely many
  degrees of freedom.}, volume 252 of \emph{Lecture Notes in Physics}, pages
  19--31.
\newblock Springer Berlin/Heidelberg, 1986.

\bibitem[Kummer(1987)]{Ku1987}
M.~Kummer.
\newblock \emph{On resonant {H}amiltonians with $n$ frequencies}, volume 278,
  pages 63--65.
\newblock Springer Berlin/Heidelberg, 1987.

\bibitem[Marsden and Ratiu(1999)]{MaRa1999}
J.~E. Marsden and T.~S. Ratiu.
\newblock \emph{Introduction to Mechanics and Symmetry}.
\newblock Springer, 1999.

\bibitem[Ortega and Ratiu(2004)]{OrRa2004}
J.~P. Ortega and T.~S. Ratiu.
\newblock \emph{Momentum Maps and {H}amiltonian Reduction}, volume 222 of
  \emph{Progress in Mathematics}.
\newblock Birkh{\"a}user, 2004.

\bibitem[Weinstein(1983)]{We1983}
A.~Weinstein.
\newblock The local structure of {P}oisson manifolds.
\newblock \emph{Journal of Differential Geometry}, 18:\penalty0 523--557, 1983.

\end{thebibliography}
\end{document}